\let\emptyset \undefined
\theoremstyle{plain}
\newtheorem{theorem}{Theorem}[section]
\newtheorem{corollary}[theorem]{Corollary}
\newtheorem{lemma}[theorem]{Lemma}
\newtheorem{proposition}[theorem]{Proposition}
\newtheorem{definition}[theorem]{Definition}
\newtheorem{assumption}[theorem]{Assumption}
\newtheorem*{definition*}{Definition}
\theoremstyle{remark}
\newtheorem{remark}[theorem]{Remark}
\newtheorem*{claim*}{Claim}
\newtheorem*{remark*}{Remark}
\newtheorem*{example*}{Example}
\newtheorem*{notation*}{Notation}
\newcommand{\PTN}{\mathcal{P}(\TN)}
\newcommand{\cWN}{\mathcal{W}_N}
\numberwithin{equation}{section}
\def\Z{{\mathbb Z}}
\def\R{{\mathbb R}}
\def\T{{\mathbb T}}
\newcommand{\tWN}{\widetilde{\mathcal{W}}_N}
\newcommand{\one}{{{\bf 1}}}
\newcommand{\TN}{{{\mathbb{T}_N^d}}}
\newcommand{\eps}{\varepsilon}
\renewcommand{\phi}{\varphi}
\newcommand{\PPhi}{\boldsymbol{\Phi}}
\newcommand{\hDelta}{\hat\Delta}
\newcommand{\dd}{\; \mathrm{d}}
\DeclareMathOperator{\Ran}{Ran}
\DeclareMathOperator{\Ker}{Ker}
\DeclareMathOperator{\lin}{lin}
\newcommand{\ip}[1]{\langle {#1}\rangle}
\newcommand{\bip}[1]{\big\langle {#1}\big\rangle}
\DeclareMathOperator{\arctanh}{arctanh}
\DeclareMathOperator{\grad}{grad}
\DeclareMathOperator{\Hess}{Hess}
\newcommand{\ddt}{\frac{\mathrm{d}}{\mathrm{d}t}}
\newcommand{\ddtt}{\frac{\mathrm{d^2}}{\mathrm{d}t^2}}
\newcommand{\cH}{\mathcal{H}}
\newcommand{\cB}{\mathcal{B}}
\newcommand{\cW}{\mathcal{W}}
\newcommand{\cI}{\mathcal{I}}
\newcommand{\sH}{\mathscr{H}}
\newcommand{\cX}{\mathcal{X}}
\newcommand{\cT}{\mathbf{T}}
\newcommand{\cA}{\mathcal{A}}
\newcommand{\CE}{\mathcal{CE}}
\newcommand{\cF}{\mathcal{F}}
\newcommand{\cP}{\mathscr{P}}
\newcommand{\PX}{\cP(\cX)}
\newcommand{\PXs}{\cP_*(\cX)}
\newcommand{\hrho}{\hat\rho}
\renewcommand{\tilde}{\widetilde}
\begin{document}

\title
[Discrete porous medium equations]
{Gradient flow structures for discrete porous medium equations}

\author{Matthias Erbar}
\author{Jan Maas}
\address{
University of Bonn\\
Institute for Applied Mathematics\\
Endenicher Allee 60\\
53115 Bonn\\
Germany}
\email{erbar@iam.uni-bonn.de}
\email{maas@uni-bonn.de}

\keywords{Non-local transportation metrics, R\'enyi entropy, porous medium equations}

 \begin{abstract}
   We consider discrete porous medium equations of the form
   $\partial_t\rho_t = \Delta \phi(\rho_t)$, where $\Delta$ is the
   generator of a reversible continuous time Markov chain on a finite
   set $\cX$, and $\phi$ is an increasing function. We show that these
   equations arise as gradient flows of certain entropy functionals
   with respect to suitable non-local transportation metrics.  This
   may be seen as a discrete analogue of the Wasserstein gradient flow
   structure for porous medium equations in $\R^n$ discovered by Otto.
   We present a one-dimensional counterexample to geodesic convexity
   and discuss Gromov-Hausdorff convergence to the Wasserstein metric.
 \end{abstract} 


\maketitle
 

\section{Introduction}
\label{sec:intro}

Recently it has been shown that discretisations of heat equations and
Fokker-Planck equations can be formulated as gradient flows of the
entropy with respect to a non-local transportation metric $\cW$ on the
space of probability measures. Results in this direction have been
obtained independently in various settings, including Fokker-Planck
equations on graphs \cite{CHLZ11}, reversible Markov chains
\cite{Ma11}, and systems of reaction-diffusion equations
\cite{Mie11a}. Related gradient flow structures have been found for
fractional heat equations \cite{Erb12} and quantum mechanical
evolution equations \cite{CaMa12,Mie12}.

The above-mentioned results can be regarded as discrete counterparts
to the by now classical result of Jordan, Kinderlehrer and Otto
\cite{JKO98}, who showed that Fokker-Planck equations on $\R^n$ are
gradient flows of the entropy with respect to the $L^2$-Wasserstein
metric on the space of probability measures. In this continuous
setting there are many other interesting partial differential
equations which admit a formulation as Wasserstein gradient
flow. Among them, one of the most prominent examples is the porous
medium equation, which has been identified as the Wasserstein gradient
flow of the R\'enyi entropy in Otto's seminal paper \cite{O01}.

It seems therefore natural to ask whether a similar gradient flow
structure exists for discrete versions of porous medium equations. In
this paper we show that suitable discretisations indeed admit a
gradient flow structure. The associated metrics turn out to be
variations on the metric $\cW$, which have already been studied in
\cite{Ma11}.

\subsection{The discrete setting}

In this paper we let $\cX$ be a finite set. We consider a matrix $Q :
\cX \times \cX \to \R$ satisfying $Q(x,y) \geq 0$ for all $x \neq y$
and $\sum_{y \in \cX} Q(x,y) = 0$. We assume that $Q$ is irreducible,
so that basic Markov chain theory implies the existence of a unique
invariant probability measure $\pi$ on $\cX$. We assume that $\pi$ is
reversible, i.e., the \emph{detailed balance equations}
\begin{align}\label{eq:detailed-balance-intro}
 Q(x,y)\pi(x) = Q(y,x)\pi(y)
\end{align}
hold for all $x, y \in \cX$. 
For a function $\psi : \cX \to \R$ we write
\begin{align}\label{eq:discrete-Laplace}
 \Delta \psi(x) := \sum_{y \in \cX} Q(x,y) \psi(y)
            = \sum_{y \in \cX} Q(x,y) (\psi(y) - \psi(x))\;.
\end{align}
The operator $\Delta$ is the generator of a continuous time Markov
semigroup $(P_t)_{t \geq 0}$. By the reversibility assumption, this
semigroup is selfadjoint on $L^2(\cX,\pi)$. Moreover, the set $\PX$ of
probability densities with respect to $\pi$, 
is invariant under the semigroup $(P_t)_{t \geq 0}$. 

We shall consider a class of non-local transportation metrics on the
space of probability densities $\PX$. The definition of such a metric \cite{Ma11}
requires a weight-function $\theta : [0,\infty) \times [0,\infty) \to
[0,\infty)$ satisfying appropriate conditions (see Definition
\ref{def:weight} below). For $\rho \in \PX$ and $x, y \in \cX$ we then
define $\hrho(x,y) = \theta(\rho(x), \rho(y))$. Inspired by the
Benamou-Brenier formula \cite{BB00}, we define for $\bar\rho_0,
\bar\rho_1 \in \PX$ the distance $\cW(\bar\rho_0, \bar\rho_1 )$ by
\begin{align*}
  \cW(\bar\rho_0, \bar\rho_1)^2 := 
  \inf_{\rho_\cdot, \psi_\cdot} \bigg\{ \frac12\int_0^1 \sum_{x,y\in \cX} (\psi(y)-\psi(x))^2\hrho(x,y) Q(x,y) \pi(x)\dd t\bigg\}\;,
\end{align*}
where the infimum runs over sufficiently regular curves $\rho : [0,1]
\to \PX$ and $\psi : [0,1] \times \cX \to \R$ satisfying the discrete
continuity equation
\begin{align*}
 \partial_t \rho_t(x) 
         + \sum_{y \in \cX}
         \big(\psi_t(y) - \psi_t(x)\big)\hrho_t(x,y) Q(x,y) = 0
\end{align*}
for all $x \in \cX$, with boundary conditions $\rho_0 = \bar\rho_0$
and $\rho_0 = \bar\rho_1$. Actually, $\cW$ is the Riemannian distance
induced by a Riemannian metric defined on the interior $\PXs$ of
$\PX$. 

Consider now the relative entropy functional $\cH : \PX \to \R$ defined by 
\begin{align*}
 \cH(\rho) := \sum_{x \in \cX} \rho(x) \log \rho(x) \;\pi(x)\;.
\end{align*}
 The following result has been shown in
\cite{CHLZ11,Ma11,Mie11a}: \emph{Solutions to the heat equation
  $\partial_t \rho_t = \Delta \rho_t$ are gradient flow trajectories of
  $\cH$ with respect to the metric $\cW$, provided that $\theta$ is
  the \emph{logarithmic mean} defined by
\begin{align}\label{eq:log-mean-intro}
 \theta(r,s) := \int_0^1 r^{1-\alpha} s^\alpha \dd \alpha\;.
\end{align}
}

\subsection{A gradient flow structure for discrete porous medium equations}
In this paper we shall consider non-linear generalisations of the
discrete heat equation. For suitable (see Assumption \ref{ass:phi})
strictly increasing functions $\phi : [0,\infty) \to \R$ we consider
the \emph{discrete porous medium type equation} given by
\begin{align}\label{eq:intro-pm}
  \partial_t \rho_t = \Delta \phi(\rho_t)\;,
\end{align}
where $\Delta$ denotes the discrete Laplacian defined in
\eqref{eq:discrete-Laplace}.  This equation can be analysed using
classical Hilbertian gradient flow techniques in a suitable ``discrete
$H^{-1}$-space'' (see Proposition \ref{prop:classical} below).

Here we are interested in more general gradient flow structures in the
spirit of the Wasserstein gradient flow structure for the porous
medium equation \cite{O01}.  For this purpose, for suitable (see
Assumption \ref{ass:f}) strictly convex functions $f : [0,\infty) \to
\R$, we consider the entropy functional $\cF : \PX \to \R$ defined by
\begin{align}\label{eq:intro-f-entropy}
 \cF(\rho) := \sum_{x \in \cX} f(\rho(x)) \pi(x)\;.
\end{align}

We shall prove the following result. 

\begin{theorem}\label{thm:intro-gradient-flow}
  Let $\phi$ and $f$ be as a above, and let $\cW$ denote the non-local
  transportation metric induced by
  \begin{align}\label{eq:theta-f-phi}
  \theta(r,s) = \frac{\phi(r) - \phi(s)}{f'(r) - f'(s)}\;.
\end{align}
Then the gradient flow equation of the entropy functional $\cF$ with
respect to $\cW$ is the discrete porous medium equation $\partial_t
\rho = \Delta \phi(\rho)$.
\end{theorem}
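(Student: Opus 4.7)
The plan is to work within the Riemannian (``Otto-type'') description of the metric $\cW$ on the interior $\PXs$. In this formalism a tangent vector at $\rho \in \PXs$ is represented by a ``velocity potential'' $\psi : \cX \to \R$ (unique modulo additive constants and the relevant kernel) via the discrete continuity equation $\dot\rho(x) = -\sum_y (\psi(y)-\psi(x))\hrho(x,y)Q(x,y)$, and the Riemannian inner product of two such tangent vectors with potentials $\psi_1,\psi_2$ reads
\begin{equation*}
  \ip{\dot\rho_1,\dot\rho_2}_\rho = \tfrac12 \sum_{x,y\in\cX} \bigl(\psi_1(y)-\psi_1(x)\bigr)\bigl(\psi_2(y)-\psi_2(x)\bigr)\hrho(x,y)Q(x,y)\pi(x).
\end{equation*}
Once this is in place, the gradient flow equation $\dot\rho = -\grad \cF(\rho)$ is determined by $\ddt \cF(\rho_t) = \ip{\grad \cF(\rho_t),\dot\rho_t}_{\rho_t}$ along arbitrary curves.

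The computational core is to identify $\grad \cF(\rho)$ explicitly. Differentiating along a curve gives $\ddt\cF(\rho_t) = \sum_x f'(\rho(x))\dot\rho(x)\pi(x)$; substituting the continuity equation and symmetrizing the resulting double sum via the symmetry $\theta(r,s)=\theta(s,r)$ combined with detailed balance $Q(x,y)\pi(x)=Q(y,x)\pi(y)$ yields
\begin{equation*}
  \ddt\cF(\rho_t) = \tfrac12 \sum_{x,y}\bigl(f'(\rho(y))-f'(\rho(x))\bigr)\bigl(\psi(y)-\psi(x)\bigr)\hrho(x,y)Q(x,y)\pi(x).
\end{equation*}
Comparing with the inner product formula, $\grad\cF(\rho)$ is represented by the potential $f'(\rho)$, so $\dot\rho = -\grad\cF(\rho)$ corresponds via the continuity equation to the potential $-f'(\rho)$:
\begin{equation*}
  \dot\rho(x) = \sum_{y\in\cX}\bigl(f'(\rho(y))-f'(\rho(x))\bigr)\theta(\rho(x),\rho(y))Q(x,y).
\end{equation*}
Now the definition \eqref{eq:theta-f-phi} of $\theta$ is engineered precisely so that $\bigl(f'(\rho(y))-f'(\rho(x))\bigr)\theta(\rho(x),\rho(y)) = \phi(\rho(y))-\phi(\rho(x))$; substituting this and using $\sum_y Q(x,y)=0$ collapses the right-hand side to $\sum_y Q(x,y)\phi(\rho(y)) = \Delta\phi(\rho)(x)$, which is the desired equation \eqref{eq:intro-pm}.

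The algebra is essentially forced by the formula for $\theta$; the main technical obstacle is instead to make the Riemannian picture rigorous. Concretely, one must verify that the $\theta$ given by \eqref{eq:theta-f-phi} satisfies the admissibility conditions (symmetry, positivity on $(0,\infty)^2$, and the regularity required by Definition \ref{def:weight}) under Assumptions \ref{ass:phi} and \ref{ass:f} on $\phi$ and $f$, so that $\cW$ is indeed a well-defined (pseudo)metric inducing the Riemannian structure sketched above. One must also match the abstract notion of ``gradient flow of $\cF$ with respect to $\cW$'' with the Otto-type ODE derived here, and ensure well-posedness --- for example, via the Hilbertian theory alluded to in Proposition \ref{prop:classical} --- so that the solution stays in $\PXs$ or can be extended by continuity at the boundary where $\rho$ vanishes. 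These framework issues, rather than the engineered cancellation, are where I expect the bulk of the technical work.
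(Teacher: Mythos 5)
Your proposal is correct and follows essentially the same route as the paper: identify the $\cW$-gradient of $\cF$ in the Riemannian picture on $\PXs$ as the tangent vector represented by the potential $f'(\rho)$ (the paper's Proposition \ref{prop:grad-F}, proved via the continuity equation, integration by parts and detailed balance), and then use the identity $\hrho\,\nabla f'(\rho)=\nabla\phi(\rho)$ built into \eqref{eq:theta-f-phi} to collapse the gradient flow equation to $\partial_t\rho=\Delta\phi(\rho)$. The only remark is that the ``framework issues'' you defer are lighter than you suggest: the requirement that $\theta_{\phi,f}$ extends to a weight function is part of Assumption \ref{ass:f} itself, and the Riemannian structure underlying the identification of tangent vectors is quoted from \cite{Ma11,EM11} (Proposition \ref{prop:riem-properties}), so the paper's proof is exactly the formal computation you carried out.
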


\begin{remark}\label{rem:examples}
  In the special case where $\phi(r) = r$ and $f(r) = r \log r$ we
  recover the result from \cite{CHLZ11,Ma11,Mie11a}. In this case we
  have $\theta(r,s) = \frac{r-s}{\log r - \log s}$, which coincides
  with \eqref{eq:log-mean-intro}.  The possibility of allowing more
  general functions $f$ has already been discussed in \cite{Ma11}.
\end{remark}

\begin{remark}\label{rem:power-mean}
  Of particular interest is the case where $\phi(r)=r^m$ and $f(r) =
  \frac{1}{m-1}r^m$ for some $0 < m \leq 2$. In this case the equation
  can be considered as a discrete porous medium equation (if $m > 1$)
  or fast diffusion equation (if $m < 1$). The functional $\cF$
  becomes a R\'enyi entropy $\cF_m$, so that Theorem
  \ref{thm:intro-gradient-flow} can be considered as a discrete
  analogue of the gradient flow structure obtained by Otto \cite{O01}.
  The expression for $\theta$ from \eqref{eq:theta-f-phi} becomes
\begin{align}\label{eq:power-mean}
\theta_m(r,s) = \frac{m-1}{m}\frac{r^m - s^m}{r^{m-1} - s^{m-1}}\;.
\end{align}
Several classical means are special cases of this expression.  In
fact, $\theta_2(r,s)= \frac{r+s}{2}$ is the arithmetic mean, $\lim_{m
  \to 1}\theta_m(r,s)$ is the logarithmic mean, and $\theta_{1/2}(r,s)
= \sqrt{rs}$ is the geometric mean.  Moreover, $\theta_{-1}(r,s) =
\frac{2rs}{r+s}$ is the harmonic mean, but we shall not consider
negative values of $m$ in the sequel.
\end{remark}

\begin{remark}\label{rem:reaction}
  For integer values of $m$, the porous medium equation takes the form
  of a chemical reaction equation, for which a gradient flow structure
  has been found in \cite{Mie11a}. However, there the driving
  functional is the relative entropy $\cH$, so that the associated
  weight function can be expressed in terms of logarithmic means. Here
  we also allow for different driving functionals, such as the R\'enyi
  entropy.
\end{remark}

\subsection{Geodesic convexity of entropy functionals}
Of crucial importance in the theory of Wasserstein gradient flows is
that the relevant entropy functionals exhibit good convexity
properties along Wasserstein geodesics. Recent work \cite{EM11,Mie11b}
shows that analogous properties hold for the relative entropy
functional $\cH$ in some discrete examples. In particular, $\cH$ turns
out to be convex along $\cW$-geodesics in one-dimensional discrete
Fokker-Planck equations \cite{Mie11b} as well as in heat equations on
$d$-dimensional square lattices in arbitrary dimension \cite{EM11}.

It thus seems natural to ask whether similar convexity properties hold
for the more general functionals $\cF$ along $\cW$-geodesics with the
appropriate choice of $\theta$ given by \eqref{eq:theta-f-phi}. In the
continous setting, it follows from a fundamental result by McCann
\cite{McC97} that the R\'enyi entropy $\cF_m(\rho) = \frac{1}{m-1}
\int_{\R^n} \rho^m(x) \dd x$ is displacement convex, i.e., convex
along $L^2$-Wasserstein geodesics in $\cP(\R^n)$, for $m \geq 1 -
\frac1n$.

However, in this paper we shall show that the discrete analogue fails
in general. We present a counterexample (Proposition
\ref{prop:counterexample}) in the case $m = 2$, which shows that even
in one dimension the R\'enyi entropy fails to be convex along
geodesics in the associated non-local transportation metric.

\begin{proposition}\label{prop:intro-counterexample}
  For $N \geq 6$, let $Q$ be the generator of simple random walk on
  the discrete circle $\cT_N=\Z/N\Z$. Let $\cW$ be the non-local
  transportation metric associated with the arithmetic mean
  $\theta_2$. Then the R\'enyi entropy $\cF_2$ is \emph{not} convex
  along $\cW$-geodesics.
\end{proposition}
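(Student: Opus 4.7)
The plan is to disprove geodesic convexity by exhibiting a density $\bar\rho \in \PXs$ together with a tangent direction in which the Hessian of $\cF_2$, in the Otto-Riemannian sense induced by $\cW$, is strictly negative. Once this is established at a single point, a short $\cW$-geodesic issuing from $\bar\rho$ in the direction of the corresponding negative eigenvector violates midpoint convexity of $\cF_2$ by a standard Taylor expansion argument.

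To obtain a workable Hessian formula I would use the Benamou-Brenier characterisation of $\cW$. A $\cW$-geodesic $(\rho_t,\psi_t)$ satisfies the discrete continuity equation together with a Hamilton-Jacobi equation, which for the arithmetic-mean weight $\theta_2(r,s)=(r+s)/2$ simplifies dramatically: since $\partial_1 \theta_2 \equiv 1/2$, the Hamilton-Jacobi equation reads $\dot\psi_t(x) + \tfrac{1}{4}\sum_y Q(x,y)(\psi_t(y)-\psi_t(x))^2 = 0$. Differentiating $\cF_2(\rho_t)=\sum_x \rho_t(x)^2 \pi(x)$ twice and substituting both geodesic equations then expresses $\ddtt \cF_2(\rho_t)|_{t=0}$ as an explicit quadratic form $H_{\bar\rho}(\psi)$ in $\psi$, whose sign is the object of interest.

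For the counterexample I would take $\bar\rho$ to be a perturbation of the uniform density supported on a short arc of $\cT_N$, paired with a $\psi$ also supported near this arc and chosen to break the local reflection symmetry so that many terms in $H_{\bar\rho}(\psi)$ cancel. The hypothesis $N\geq 6$ ensures that the support-arc is a proper subset of the circle so that the perturbation does not wrap around, leaving a clean local computation on a segment. After imposing enough symmetry to reduce the parameter count, $H_{\bar\rho}(\psi)$ should collapse to a short explicit expression in a handful of parameters whose sign can be inspected, and the counterexample is obtained by tuning these parameters so that the expression becomes strictly negative.

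The main obstacle will be identifying the right witness configuration $(\bar\rho,\psi)$. At the uniform density $\bar\rho\equiv 1$ the form $H$ is non-negative, since there $\cW$ agrees infinitesimally with a weighted Hilbertian metric under which $\cF_2$ is manifestly convex; the negative eigenvalue must therefore be produced by the nonlinearity introduced when $\bar\rho$ moves away from uniform. Organising the resulting multi-indexed sum so that the negative contribution is isolated from the many competing non-negative terms, and rigorously verifying its sign, is where most of the computational work lies; a small amount of symbolic exploration is likely needed to pin down a clean witness with $H_{\bar\rho}(\psi)<0$.
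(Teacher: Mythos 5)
Your outline coincides with the paper's strategy: compute the Hessian quadratic form of $\cF_2$ along $\cW$-geodesics (using the continuity and Hamilton--Jacobi equations, which is exactly how the paper's formula $\cB(\rho,\psi)$ in \eqref{eq:B} and Proposition \ref{prop:Hessian} are obtained), find a point of $\PXs$ and a direction where it is negative, and conclude that geodesic convexity fails via the local Riemannian structure (in the paper, via the equivalence in Proposition \ref{prop:Daneri-Savare}). However, the proposal stops short of the actual content of the proposition: no witness $(\bar\rho,\psi)$ is exhibited, and you explicitly defer the decisive step (``tuning these parameters'', ``symbolic exploration is likely needed'') — but the existence of such a configuration \emph{is} the theorem, so as it stands the proof is incomplete.

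Moreover, the region in which you propose to search is problematic. At the uniform density one has $\hrho\equiv 1$ and $\Delta\phi(\rho)\equiv 0$, so $\cB(\one,\psi)=2\|\Delta\psi\|_\pi^2\geq 2\lambda_1\,\cA(\one,\psi)$, with $\lambda_1>0$ the spectral gap; by continuity of $\cB$ and $\cA$ on $\PXs$, the Hessian remains strictly positive in a whole neighbourhood of $\one$, so a (small) ``perturbation of the uniform density supported on a short arc'' cannot produce a negative direction. The paper's witness sits at the opposite extreme, near the boundary of $\PX$: a near-Dirac density $\rho=(\eps,\,N-(N-1)\eps,\,\eps,\dots,\eps)$ together with $\psi=(0,1,2,\dots,2,0)$, for which only a few terms of the quadratic form survive and an explicit computation gives $\cB(\rho,\psi)=-\tfrac{q^2N}{2}+O(\eps)$ while $\cA(\rho,\psi)=q+O(\eps)$, whence $\kappa_Q\leq-\tfrac{qN}{2}$. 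To complete your argument you would need to carry out such an explicit evaluation (or an equivalent one), verify its sign rigorously, and note that the role of $N\geq 6$ is to make the chosen $\psi$ and $\rho$ compatible with the cancellation pattern, not merely to prevent the support from wrapping around.
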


It remains an interesting open question whether the R\'enyi entropy
$\cF_m$ is geodesically convex in some non-trivial discrete examples
for values of $m \neq 1$.

\subsection{Gromov-Hausdorff convergence of discrete transportation metrics}
Since the discrete transportation metrics discussed in this paper take
over the role of the $L^2$-Wasserstein metric in a discrete setting,
it seems natural to ask whether they converge to the $L^2$-Wasserstein
metric by a suitable limiting procedure. First results in this spirit
have been obtained in \cite{GM12}, where it was proved that discrete
transportation metrics $\cW_N$ associated with simple random walk on
the discrete torus $(\Z / N \Z)^d$ converge to the $L^2$-Wasserstein
metric $W_2$ over the torus $\mathbb{T}^d$. The weight function
considered in \cite{GM12} is the logarithmic mean. In the present
paper we observe that the result extends to more general weight
functions associated to porous medium equations.

\begin{theorem}\label{thm:GH-intro}
  Let $d \geq 1$, let $0 < m \leq 2$, and let $\cWN$ be the
  renormalised discrete transportation metric on $\PTN$ associated
  with simple random walk on $\TN$ and weight function $\theta_m$.
  Then the metric spaces $(\PTN,\cW_N)$ converge to
  $(\cP(\mathbb{T}^d),W_2)$ in the sense of Gromov-Hausdorff as $N \to
  \infty$.
\end{theorem}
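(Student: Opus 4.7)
The plan is to follow the strategy introduced in \cite{GM12} for the logarithmic mean and verify that each step extends to the weight functions $\theta_m$ for $0 < m \le 2$. The structural properties of $\theta_m$ used throughout are: $\theta_m$ is jointly continuous and concave, homogeneous of degree one, and a mean in the sense that $\theta_m(r,r)=r$ with $\min(r,s) \le \theta_m(r,s) \le \max(r,s)$. All three hold in the specified range. Gromov--Hausdorff convergence will be obtained by constructing, for each $N$, an embedding $\iota_N : \PTN \to \cP(\mathbb{T}^d)$ sending a discrete density $\rho$ to its piecewise-constant extension on the unit cell of size $1/N$, and showing that $\iota_N$ is an $\eps_N$-isometric $\eps_N$-net for some $\eps_N \to 0$. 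Denseness of $\iota_N(\PTN)$ in $\cP(\mathbb{T}^d)$ with respect to $W_2$ is standard, so the heart of the matter is the metric comparison $\cW_N(\rho, \sigma) \approx W_2(\iota_N \rho, \iota_N \sigma)$, which splits into matching upper and lower bounds.

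For the upper bound I would take $\mu, \nu \in \cP(\mathbb{T}^d)$ smooth and bounded away from zero, together with an almost-optimal Benamou--Brenier pair $(\mu_t, v_t)$, and set $\rho_t^N(x)$ to be the average of the density of $\mu_t$ over the cell centred at $x/N$, while choosing $\psi_t^N$ as a discrete primitive of $v_t$. The discrete continuity equation then holds up to a vanishing error, and the mean property of $\theta_m$ combined with smoothness of $\mu_t$ gives $\theta_m(\rho_t^N(x), \rho_t^N(y)) \to \mu_t(x/N)$ as $N \to \infty$ for neighbouring $x,y$. A Riemann-sum argument shows that the discrete action converges to $\int_0^1 \int_{\mathbb{T}^d} |v_t|^2 \dd \mu_t \dd t$, so $\limsup_N \cW_N(\rho^N_0, \rho^N_1) \le W_2(\mu, \nu)$; a standard density argument removes the smoothness assumption.

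For the lower bound, one takes almost-optimal discrete curves $(\rho^N, \psi^N)$ and lifts them to $\cP(\mathbb{T}^d)$ by convolving the piecewise-constant extensions with a mollifier at a scale $\delta_N$ with $1/N \ll \delta_N \ll 1$. Joint concavity of $\theta_m$ together with Jensen's inequality, applied cell by cell, then allows one to dominate the continuous Benamou--Brenier action of the lifts by the discrete action of $(\rho^N, \psi^N)$, up to an error that tends to zero. The expected main obstacle is precisely this step: the concavity-plus-mollification argument of \cite{GM12} must be seen to survive with $\theta_m$ in place of the logarithmic mean. In the fast-diffusion range $m < 1$ extra care is needed near density $0$, where $\theta_m$ is continuous but not smooth; this can be handled by first proving the estimate for densities bounded away from $0$ and then removing the truncation by a continuity argument. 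Combining the two inequalities yields the required $\eps_N$-isometry and hence Gromov--Hausdorff convergence.
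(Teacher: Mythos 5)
Your overall frame (embed $\PTN$ into $\cP(\mathbb{T}^d)$ by piecewise-constant extension, prove matching upper and lower bounds on the actions, conclude $\eps_N$-isometry) is indeed the strategy of \cite{GM12} that the paper invokes, and your upper bound sketch is fine. The genuine gap is exactly at the step you yourself flag as ``the expected main obstacle'': dominating the continuous Benamou--Brenier action of the lifted discrete curves by the discrete $\theta_m$-action. Concavity of $\theta_m$ plus Jensen, applied cell by cell, does not give this. If you compute the continuous action of a piecewise-constant lift across a face with adjacent densities $a,b$, the relevant weight that appears is the \emph{harmonic} mean $\tilde\theta(a,b)=\frac{2ab}{a+b}$, i.e.\ the continuous action is comparable to the discrete action taken with weight $\tilde\theta$; since $\tilde\theta=\theta_{-1}\leq\theta_m$ for all $0<m\leq 2$, this is \emph{larger} than the $\theta_m$-action, so the inequality you need goes the wrong way in general (and the ratio $\theta_m/\tilde\theta$ blows up when one of the densities is small). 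Concavity also enters the actual argument through a different door than the one you propose: it is used to show that the discrete heat semigroup contracts $\cWN$, which is what allows one to regularise an almost-optimal discrete curve at negligible metric cost; mollifying the lift at a mesoscopic scale $\delta_N$, as you suggest, does not by itself control the oscillation of the \emph{discrete} densities entering the weights.

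What closes the gap in the paper (and in \cite{GM12}) is a quantitative comparison between $\theta_m$ and the harmonic mean: for all $a,b>0$,
\begin{align*}
\frac{1}{\tilde\theta(a,b)}-\frac{1}{\theta_m(a,b)}\;\leq\;\frac{(b-a)^2}{ab}\,\frac{1}{\tilde\theta(a,b)}\;,
\end{align*}
so that once the heat-flow regularisation guarantees small relative oscillation $(b-a)^2/(ab)$ between neighbouring cells, the metric $\cWN$ is close to the harmonic-mean metric $\tWN$, and the latter compares directly with $W_2$ of the piecewise-constant lifts. Verifying this estimate for the family $\theta_m$ is the only genuinely new ingredient beyond \cite{GM12}: by the monotonicity of $m\mapsto\theta_m$ (Lemma \ref{lem:theta-m}) it suffices to check it for $m=2$, where it is an elementary computation. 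Your proposal neither identifies nor proves an estimate of this type, and the Jensen/concavity mechanism you offer in its place would fail for the reason above; the truncation-near-zero remark for $m<1$ addresses a side issue, not this one. If you replace your lower-bound step by (i) heat-semigroup regularisation justified by the contraction property (this is where concavity of $\theta_m$ is really used) and (ii) the displayed comparison with $\tilde\theta$, the rest of your outline matches the intended proof.
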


Since this result can be obtained by a minor modification of the proof
in \cite{GM12}, we do not give a detailed proof here, but merely point
out the crucial properties which make the argument go through.

\section{Preliminaries on non-local transportation metrics}
\label{sec:prelim}
  
In this section we collect some preliminary results on non-local
transportation metrics. The presentation here is based on \cite{Ma11}.
  
In this paper, a reversible Markov chain consists of a triple $(\cX,
Q, \pi)$ where
\begin{itemize}
\item $\cX$ is a finite set;
\item $Q : \cX \times \cX \to \R$ is a $Q$-matrix, i.e., $Q(x,y) \geq
  0$ for all $x \neq y$ and $\sum_{y \in \cX} Q(x,y) = 0$ for all $x
  \in \cX$. Throughout this paper we assume that $Q$ is irreducible,
  i.e., for all $x,y \in \cX$ there exist $n \geq 1$ and $x_0, \ldots
  x_n \in \cX$ such that $x_0 = x$, $x_n = y$ and $Q(x_{i-1},x_i)> 0$
  for all $1 \leq i \leq n$;
\item $\pi$ is the associated stationary probability measure on $\cX$,
  which exists uniquely by basic Markov chain theory. We impose the
  standing assumption that $\pi$ is reversible, i.e., we assume that
  the \emph{detailed balance equations}
\begin{align}\label{eq:detailed-balance}
 Q(x,y)\pi(x) = Q(y,x)\pi(y)
\end{align}
are satisfied for all $x, y \in \cX$.
\end{itemize}

The set of probability densities with respect to $\pi$ will be
denoted by
\begin{align*}
 \PX := \Big\{ \, \rho : \cX \to \R \ | \ 
    \rho(x) \geq 0 \quad \forall x \in \cX\   \;; \ \sum_{x \in \cX} \rho(x)\pi(x)   = 1 \, \Big\}\;,
\end{align*}
and we set
\begin{align*}
 \PXs := \Big\{ \, \rho \in \PX \ | \ 
    \rho(x) > 0 \quad \forall x \in \cX\ \Big\}\;.
\end{align*}
\begin{definition}\label{def:weight}
  A \emph{weight function} is a function $\theta : [0,\infty) \times
  [0,\infty) \to [0,\infty)$ having the following properties:
  \begin{enumerate}
  \item[(A1)] $\theta$ is continuous on $[0,\infty) \times
  [0,\infty)$ and
    $C^\infty$ on $(0,\infty) \times (0,\infty)$;
  \item[(A2)] $\theta(s,t) = \theta(t,s)$ for $s, t \geq 0$;
  \item[(A3)] $\theta(s,t) > 0$ for $s,t > 0$;
  \item[(A4)] $\theta(r,s) \leq \theta(r,t)$ for all $r \geq 0$ and $0 \leq s \leq t$;
  \item[(A5)] $\theta : [0,\infty) \times [0,\infty) \to [0,\infty)$ is concave.
  \end{enumerate}
\end{definition}
Note that (A5) implies in particular the following estimate:
\begin{align*}
\theta(2 s, 2 t) \leq 2 \theta(s,t)\quad\forall s,t\geq0\ .
\end{align*}

Let us fix a reversible Markov chain $(\cX, Q, \pi)$ and the weight
function $\theta$ throughout the remainder of this section.  We recall
the definition of the associated non-local transportation metric from
\cite{Ma11}, as given in a slightly different (but equivalent) form in
\cite{EM11}.  Actually, in \cite{Ma11,EM11} a slightly less general
case has been considered, which corresponds to $Q(x,x) = -1$ for all
$x \in \cX$. However, it is easily checked that the results extend to
the present setting verbatim.
\begin{definition}[Non-local transportation
  metrics]\label{def:metric}
Fix a weight function $\theta$.
For $\bar\rho_0, \bar\rho_1 \in \PX$ we define 
\begin{align*}
  \cW(\bar\rho_0, \bar\rho_1)^2 := 
  \inf \bigg\{ \int_0^1 \cA(\rho_t, \psi_t) \dd t \ : \ {(\rho, \psi) \in \CE(\bar\rho_0,\bar\rho_1)} \bigg\}\;,
\end{align*}
where $\CE(\bar\rho_0,\bar\rho_1)$ denotes the collection
of pairs $(\rho,\psi)$ satisfying the following conditions:
\begin{align} 
\label{eq:conditions} 
 \left\{ 
 \begin{array}{ll}
{(i)} & \rho : [0,1] \to \R^\cX  \text{ is }C^\infty\;;\\ 
{(ii)} &  \rho_0 = \bar\rho_0\;, \qquad \rho_1 = \bar\rho_1\;; \\
{(iii)} &  \rho_t \in \PX \text{ for all $t \in [0,1]$}\;;\\
{(iv)} & \psi  : [0,1] \to \R^{\cX}  \text{ is measurable}\;;\\ 
{(v)} &  \text{For all $x \in \cX$ and all $t\in (0,1)$ we have}\\
       &\displaystyle{\partial_t \rho_t(x) 
         + \sum_{y \in \cX}
         \big(\psi_t(y) - \psi_t(x)\big)\hrho_t(x,y) Q(x,y) = 0}\;,
\end{array} 
\right.
\end{align}
and for $\rho\in\PX$ and $\psi \in \R^{\cX}$,
\begin{align*}
 \cA(\rho,\psi) 
 := 
   \frac12\sum_{x,y\in \cX} (\psi(y)-\psi(x))^2\hrho(x,y) Q(x,y) \pi(x)\;.
\end{align*}
\end{definition}

The equation appearing in \eqref{eq:conditions} can be regarded as a
``discrete continuity equation''. The analogy with the continuous case
becomes more apparent if we introduce some notation.  For a function
$\psi : \cX \to \R$, we consider the discrete gradient $\nabla \psi :
\cX \times \cX \to \R$ given by
\begin{align*}
 \nabla \psi(x,y) := \psi(y) - \psi(x)\;.
\end{align*}
For a function $\Psi : \cX \times \cX$ we define its
discrete divergence $\nabla \cdot \Psi : \cX \to \R$ by
\begin{align*}
( \nabla \cdot \Psi )(x) 
  := \frac12 \sum_{y \in \cX}  (\Psi(x,y) - \Psi(y,x) ) Q(x,y) \in \R\;.
\end{align*}
With this notation the integration by parts formula
\begin{align*}
 \ip{\nabla \psi, \Psi}_\pi = -\ip{\psi,\nabla\cdot \Psi}_\pi
\end{align*}
holds. Here we write, for $\phi,\psi : \cX \to \R$ and $\Phi,\Psi : \cX \times\cX \to \R$,
\begin{align*}
\ip{\phi, \psi}_\pi &= \sum_{x \in \cX} \phi(x) \psi(x) \pi(x)\;, \\
\ip{\Phi, \Psi}_\pi &= \frac12 \sum_{x,y \in \cX} 
         \Phi(x,y) \Psi(x,y) K(x,y) \pi(x)\;.
\end{align*}
The continuity equation above can now be written as
\begin{align*}
 \partial_t \rho_t + \nabla \cdot (\hrho_t \nabla \psi_t) = 0\;.
\end{align*}
Moreover, for $\rho \in \PX$ we shall write 
\begin{align*}
 \ip{\Phi, \Psi }_\rho
   := 
   \frac12\sum_{\substack{x, y \in \cX\\x\neq y}} \Phi(x,y)\Psi(x,y) \hrho(x,y) Q(x,y) \pi(x)\;,\qquad
   \| \Psi\|_\rho := \sqrt{ \ip{\Psi, \Psi }_\rho}\;.
\end{align*}
With this notation we have 
\begin{align*}
\cA(\rho, \psi):= \| \nabla \psi\|_\rho^2\;.
\end{align*}

Basic properties of $\cW$ have been studied in \cite{Ma11}. Under the
current assumptions we have the following result.

\begin{proposition}[Metric structure]\label{prop:W-metric}
  The function $\cW : \PX \times \PX \to \R$ defines a metric on
  $\PX$. Moreover, $(\PX,\cW)$ is a geodesic space, i.e., for all
  $\bar\rho_0, \bar\rho_1 \in \PX$ there exists a curve $\rho : [0,1]
  \to \PX$ satisfying $\rho_0 = \bar \rho_0$, $\rho_1 = \bar \rho_1$,
  and
\begin{align*}
 \cW(\rho_s, \rho_t) =  |s-t|  \cW(\rho_0, \rho_1) 
\end{align*}
for all $s, t \in [0,1]$.
\end{proposition}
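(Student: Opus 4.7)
My plan is to follow the Benamou--Brenier framework for non-local transportation metrics developed in \cite{Ma11}, adapted to the slightly more general $Q$-matrices allowed here (no requirement $Q(x,x) = -1$). The argument splits into (a) the algebraic metric axioms, (b) finiteness and the separation property, and (c) existence of minimising geodesics.

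For (a), symmetry follows from reversing an admissible pair: if $(\rho_t,\psi_t) \in \CE(\bar\rho_0,\bar\rho_1)$, then $(\rho_{1-t},-\psi_{1-t}) \in \CE(\bar\rho_1,\bar\rho_0)$ with identical action. Non-negativity is obvious, and the constant curve with $\psi \equiv 0$ shows $\cW(\rho,\rho) = 0$. For the triangle inequality I would concatenate near-optimal curves on sub-intervals $[0,\lambda]$ and $[\lambda,1]$, rescaling the potentials by $1/\lambda$ and $1/(1-\lambda)$ respectively and optimising over $\lambda \in (0,1)$.

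For (b), finiteness of $\cW$ is the most delicate point because $\hrho(x,y)$ may degenerate at the boundary of $\PX$. When both endpoints lie in $\PXs$ the linear interpolation $\rho_t = (1-t)\bar\rho_0 + t\bar\rho_1$ stays in $\PXs$, and for each $t$ the weighted discrete operator $\psi \mapsto -\nabla \cdot (\hrho_t \nabla \psi)$ is positive on mean-zero functions by irreducibility of $Q$ together with (A3); so the continuity equation can be solved for a smooth $\psi_t$ whose action is finite by (A1). Boundary points $\bar\rho \in \PX \setminus \PXs$ are reached by briefly running the semigroup $(P_s)_{s \geq 0}$, which instantly regularises into $\PXs$, with the action of this short segment controlled using the sub-linear growth estimate $\theta(2s,2t) \leq 2\theta(s,t)$ implied by (A5). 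The separation property then follows by testing the continuity equation against a fixed $\phi : \cX \to \R$, integrating by parts, and applying Cauchy--Schwarz to obtain
\begin{align*}
\bigl| \ip{\phi, \bar\rho_1 - \bar\rho_0}_\pi \bigr| \;\leq\; \Bigl( \sup_{\rho \in \PX} \| \nabla \phi \|_\rho \Bigr) \, \cW(\bar\rho_0,\bar\rho_1)\,,
\end{align*}
where the supremum is finite by continuity of $\theta$ and compactness of $\PX$; separating points via suitable $\phi$ forces $\bar\rho_0 = \bar\rho_1$ once $\cW(\bar\rho_0,\bar\rho_1) = 0$.

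For (c), I would apply the direct method in the calculus of variations. A minimising sequence $(\rho^n,\psi^n) \in \CE(\bar\rho_0,\bar\rho_1)$ has uniformly bounded action, so the curves $t \mapsto \rho^n_t$ are equicontinuous in $\cW$ and Arzelà--Ascoli extracts a uniform subsequential limit $\rho$ inside the compact set $\PX$. The associated momentum variables $V^n_t(x,y) := \hrho^n_t(x,y)\bigl(\psi^n_t(y) - \psi^n_t(x)\bigr)$ are bounded in $L^2$ of space-time and pass to a weak limit $V$; the joint convexity in $(\rho,V)$ of the action density, a consequence of the concavity assumption (A5), provides the lower semicontinuity needed to conclude that the limiting pair is itself in $\CE(\bar\rho_0,\bar\rho_1)$ and attains the infimum. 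A standard constant-speed reparametrisation then yields the geodesic property $\cW(\rho_s,\rho_t) = |s-t|\,\cW(\rho_0,\rho_1)$. I expect step (b) to be the main obstacle, and in particular it is here that (A5) plays its essential role by preventing the action from blowing up near $\partial \PXs$.
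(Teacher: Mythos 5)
The serious issue is in your step (b), and specifically in how you propose to handle endpoints on the boundary of $\PX$. Running the semigroup does indeed give a curve that enters $\PXs$ instantly, but its action need not be finite, and the estimate $\theta(2s,2t)\leq 2\theta(s,t)$ cannot save you: that is an \emph{upper} bound on $\theta$, whereas the danger is $\theta$ being too \emph{small}, since $\hrho$ sits in the denominator of the action once you write it in terms of the momentum. Concretely, take the two-point space with $Q(a,b)=Q(b,a)=1$, $\pi=(\tfrac12,\tfrac12)$, and the harmonic mean $\theta(r,s)=\tfrac{2rs}{r+s}$, which satisfies (A1)--(A5) (it is $\theta_{-1}$ from Remark \ref{rem:power-mean}). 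Starting from $\bar\rho=(0,2)$, the heat flow is $\rho_s=(1-e^{-2s},1+e^{-2s})$, so $\hrho_s(a,b)=1-e^{-4s}\sim 4s$ while $|\partial_s\rho_s(a)|\to 2$; the action density is $\partial_s\rho_s(a)^2/(2\hrho_s(a,b))\sim 1/(2s)$, which is not integrable at $s=0$. Linear interpolation with a point of $\PXs$ fails for the same reason. Yet $\cW$ \emph{is} finite for this weight function, so your argument is not just incomplete, it uses a path that genuinely does not work under the stated assumptions. The correct mechanism — and the one the paper relies on — is the criterion of \cite[Theorem 3.12]{Ma11}: finiteness follows once $\int_0^1\theta(1-r,1+r)^{-1/2}\dd r<\infty$, which is exactly what (A5) combined with (A3) yields (e.g.\ $\theta(1-r,1+r)\geq(1-r)\,\theta(1,1)$ by concavity and symmetry). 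Geometrically, one must approach the boundary along curves whose speed degenerates like $\sqrt{\theta}$, so that $\dot\rho^2/\theta$ stays integrable; neither the heat flow nor linear interpolation does this in general.

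The rest of your outline is sound but essentially reproves the results the paper simply cites: the axioms and the separation bound are as in \cite{Ma11}, and your direct-method argument for existence of geodesics is the proof of \cite[Theorem 3.2]{EM11}. Two technical points you gloss over there: concatenation of admissible curves is not $C^\infty$ at the junction, and the limit of a minimizing sequence is only absolutely continuous, so in both places you need the relaxed (momentum-based) reformulation of the action and of the continuity equation, cf.\ \cite[Lemma 2.9]{EM11}, before you can conclude that the limiting pair is admissible and attains the infimum.
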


\begin{proof}
  Under somewhat weaker assumptions, it has been proved in
  \cite{Ma11} that $\cW$ defines an extended, i.e., (possibly
  $[0,\infty]$-valued) metric. To show that $\cW$ is finite under the
  current assumptions, we apply \cite[Theorem 3.12]{Ma11}, which
  asserts that it suffices to check that
\begin{align*}
C_\theta := \int_0^1 \frac{1}{\sqrt{\theta(1-r, 1+r)}} \dd  r < \infty \;.
\end{align*}
Using the concavity assumption (A5) we infer that
\begin{align*}
 \theta(1-r, 1+r) \geq \frac{1-r}{2} \theta(2,0) + \frac{1+r}{2} \theta(2,0)
\end{align*}
for $r \in [-1,1]$, which implies that $C_\theta < \infty$.

The final assertion has been proved in \cite[Theorem 3.2]{EM11}.
\end{proof}

Some basic properties of the metric $\cW$ are collected in the following
result, which asserts that the distance $\cW$ is induced by a
Riemannian metric on the interior $\PXs$ of $\PX$.

\begin{proposition}[Riemannian structure]\label{prop:riem-properties}
The restriction of $\cW$ to $\PXs$ is the Riemannian
  distance induced by the following Riemannian structure:
\begin{itemize}
\item[-] the tangent space of $\rho \in \PXs$ can be identified
  with the set of discrete gradients
\begin{align*}
 T_\rho := \{ \nabla \psi \ : \ \psi \in \R^\cX \}
\end{align*}
by means of the following identification: given a smooth curve
$(-\eps, \eps) \ni t \mapsto \rho_t \in \PXs$ with $\rho_0 = \rho$,
there exists a unique element $\nabla \psi_0 \in T_\rho$, such that
the continuity equation \eqref{eq:conditions}(v) holds at $t = 0$.
\item[-] The Riemannian metric on $T_\rho$ is given by the inner product
\begin{align*}
 \ip{\nabla \phi, \nabla \psi}_\rho 
   = \frac12 \sum_{x,y\in \cX} 
 (\phi(x) - \phi(y))(\psi(x) - \psi(y)) \hrho(x,y)Q(x,y) \pi(x)\;.   
\end{align*}
\end{itemize}
\end{proposition}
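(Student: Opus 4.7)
The proof naturally splits into two parts corresponding to the two bulleted claims: the tangent space identification, and the identification of $\cW$ with the induced Riemannian distance.

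\textbf{Tangent space identification.} Fix $\rho \in \PXs$ and define the weighted discrete Laplacian $L_\rho : \R^\cX \to \R^\cX$ by $L_\rho \psi := -\nabla \cdot (\hrho \nabla \psi)$. Since $\rho(x) > 0$ for every $x$, assumption (A3) yields $\hrho(x,y) > 0$ for all $x,y \in \cX$, so the weighted graph with conductances $\hrho(x,y) Q(x,y)$ inherits connectivity from the irreducibility of $Q$. Combined with the integration by parts formula, the operator $L_\rho$ is selfadjoint on $L^2(\cX,\pi)$ and satisfies $\langle L_\rho \psi, \psi \rangle_\pi = \| \nabla \psi \|_\rho^2$, which vanishes precisely when $\psi$ is constant. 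Hence $\ker L_\rho$ consists of constants and, by the finite-dimensional spectral theorem, $\Ran L_\rho$ equals the orthogonal complement of constants in $L^2(\cX,\pi)$, namely $\{ s \in \R^\cX : \sum_{x} s(x) \pi(x) = 0\}$, which is exactly the tangent space to $\PX$ at $\rho$. For a smooth curve $(-\eps,\eps) \ni t \mapsto \rho_t \in \PXs$ with $\rho_0 = \rho$, the derivative $\dot\rho_0$ lies in this tangent space, so there exists $\psi_0 \in \R^\cX$, unique up to an additive constant, with $\dot\rho_0 = L_\rho \psi_0$; this determines $\nabla \psi_0 \in T_\rho$ uniquely.

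\textbf{Riemannian structure and distance identity.} Bilinearity and symmetry of $\langle \cdot, \cdot \rangle_\rho$ are visible from the formula, positive-definiteness on $T_\rho$ follows from the same connectivity argument as above, and smoothness of $\rho \mapsto \langle \nabla \phi, \nabla \psi\rangle_\rho$ on $\PXs$ is inherited from (A1). From the definitions, $\cA(\rho, \psi) = \| \nabla \psi\|_\rho^2$. Given an admissible pair $(\rho_\cdot, \psi_\cdot) \in \CE(\bar\rho_0, \bar\rho_1)$ with $\rho_t \in \PXs$ for all $t$, the continuity equation together with the first part identifies $\nabla \psi_t$ as the unique element of $T_{\rho_t}$ representing $\dot \rho_t$, so $\cA(\rho_t, \psi_t)$ equals the squared Riemannian speed of $\rho_t$. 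Conversely, every smooth curve in $\PXs$ joining $\bar\rho_0$ to $\bar\rho_1$ produces such an admissible pair via Step 1. A constant-speed reparametrisation together with the Cauchy--Schwarz inequality then yields that the infimum of $\int_0^1 \cA(\rho_t, \psi_t) \dd t$ over such pairs equals the squared Riemannian distance.

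\textbf{Main obstacle.} The delicate point is that the infimum defining $\cW(\bar\rho_0, \bar\rho_1)$ must coincide with the one restricted to curves staying in $\PXs$. General minimising sequences in $\CE(\bar\rho_0, \bar\rho_1)$ can approach the boundary of $\PX$, where $\hrho$ may vanish and the tangent space identification breaks down. I would handle this by interpolating a general admissible pair with a curve passing through the uniform density in $\PXs$; the key tool is the scaling bound $\theta(2s, 2t) \leq 2 \theta(s,t)$ noted after Definition \ref{def:weight}, which controls the resulting perturbation of the action under the interpolation. This approximation argument follows the strategy used in \cite{Ma11, EM11}, from which the remaining technical details can be imported essentially verbatim.
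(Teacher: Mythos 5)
The paper itself does not prove this proposition: it is imported from \cite{Ma11} (see also \cite{EM11}), with the remark that the results extend verbatim to general $Q$, and your outline---identifying tangent vectors via the weighted operator $\psi\mapsto-\nabla\cdot(\hrho\,\nabla\psi)$, whose kernel is the constants by irreducibility and (A3), the identity $\cA(\rho,\psi)=\|\nabla\psi\|_\rho^2$ combined with reparametrisation and Cauchy--Schwarz, and the approximation of boundary-touching competitors by interpolation with the uniform density---is precisely the argument given in those references, so your proposal is correct and follows essentially the same route as the paper's (cited) proof. One small correction to the last step: the estimate that makes the interpolation work is $\theta\big((1-\eps)r+\eps,(1-\eps)s+\eps\big)\geq(1-\eps)\theta(r,s)$, which follows directly from concavity (A5) and nonnegativity of $\theta$; the doubling bound $\theta(2s,2t)\leq 2\theta(s,t)$ by itself (even combined with (A4)) only yields a constant-factor control of the perturbed action, which is not enough to conclude that it converges to the original action as $\eps\to0$.
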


The following result characterises the geodesic equations for
$\cW$. The equation for $\psi$ is reminiscent of the Hamilton-Jacobi
equation, which describes geodesics in the Wasserstein space.

\begin{proposition}[Geodesic equations]\label{prop:geod}
Let $\bar\rho \in \PXs$ and $\bar\psi \in \R^{\cX}$. On a
sufficiently small time interval around 0, the unique constant speed
geodesic with $\rho_0 = \bar\rho$ and initial tangent vector 
$\nabla \psi_0 = \nabla\bar\psi$ satisfies the following equations:
\begin{equation}\begin{aligned} \label{eq:geod-equs}
\begin{cases}
\partial_t \rho_t(x) + 
   \displaystyle\sum_{y \in \cX}  ( \psi_t(y) - \psi_t(x) ) \hrho_t(x,y) Q(x,y)  = 0 \;,\\
 \partial_t \psi_t(x)  + \displaystyle\frac12
     \displaystyle\sum_{y \in \cX} \big(  \psi_t(x) -\psi_t(y) \big)^2 
     		 \partial_1\theta(\rho_t(x), \rho_t(y))  Q(x,y) = 0 \;.
\end{cases}
\end{aligned}\end{equation}
\end{proposition}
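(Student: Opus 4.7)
The plan is to derive \eqref{eq:geod-equs} as the Euler--Lagrange system for the action functional $\mathcal{E}(\rho,\psi) = \int_0^1 \cA(\rho_t,\psi_t)\,dt$ on $\CE(\bar\rho_0,\bar\rho_1)$, in the spirit of the Benamou--Brenier formulation of Wasserstein geodesics. The first equation in \eqref{eq:geod-equs} is nothing but the continuity equation, which is the constraint (v) from Definition~\ref{def:metric}, and is therefore automatically satisfied by any admissible pair. To obtain the evolution of $\psi$, I introduce a Lagrange multiplier $\lambda_t \in \R^\cX$ for the continuity equation and form the augmented Lagrangian
\begin{equation*}
\mathcal{L}(\rho,\psi,\lambda) = \int_0^1 \Bigl[\cA(\rho_t,\psi_t) - \sum_{x}\pi(x)\lambda_t(x)\Bigl(\partial_t \rho_t(x) + \sum_y (\psi_t(y)-\psi_t(x))\hat\rho_t(x,y)Q(x,y)\Bigr)\Bigr]\,dt.
\end{equation*}

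The first variation with respect to $\psi$, combined with the detailed balance identity $Q(x,y)\pi(x) = Q(y,x)\pi(y)$ and the symmetry $\hat\rho(x,y) = \hat\rho(y,x)$, yields the pointwise stationarity condition
\begin{equation*}
\sum_y \bigl[2(\psi_t(z)-\psi_t(y)) + (\lambda_t(z)-\lambda_t(y))\bigr]\hat\rho_t(z,y)Q(z,y) = 0 \qquad \forall\, z \in \cX.
\end{equation*}
This says that $2\psi_t + \lambda_t$ lies in the kernel of the weighted discrete operator $\nabla\cdot(\hat\rho_t\nabla\,\cdot\,)$. Since $Q$ is irreducible and $\hat\rho_t > 0$ on $\PXs$, this kernel consists only of constants, so $\lambda_t = -2\psi_t$ up to an additive constant in $x$ (which is irrelevant because only gradients of $\lambda_t$ enter $\mathcal{L}$). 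The first variation with respect to $\rho$ is more intricate: one integrates by parts in $t$ (the boundary terms vanish because the endpoints of $\rho$ are fixed), then applies the chain rule to $\hat\rho(x,y) = \theta(\rho(x),\rho(y))$, producing two pieces involving $\partial_1\theta$ and $\partial_2\theta$ respectively. The symmetry identity $\partial_2\theta(a,b) = \partial_1\theta(b,a)$ together with detailed balance collapses these two pieces into a single sum involving only $\partial_1\theta$. Substituting $\lambda_t = -2\psi_t$, the residual term linear in the differences $\lambda_t(z) - \lambda_t(y)$ combines with the quadratic $\psi$-term; after a short simplification one recovers precisely the second equation in \eqref{eq:geod-equs}.

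Local existence and uniqueness of the geodesic on a small interval around $0$ then follow from standard ODE theory, since the right-hand sides of \eqref{eq:geod-equs} are smooth in $(\rho,\psi) \in \PXs \times \R^\cX$ by assumption (A1) and $\bar\rho \in \PXs$ keeps the trajectory away from the boundary for short times. The main obstacle I anticipate is not conceptual but algebraic bookkeeping: every variation generates several double sums over $\cX \times \cX$, and one must repeatedly invoke detailed balance and the symmetry of $\theta$ to recombine them into the compact form of \eqref{eq:geod-equs}. A further point requiring care is the gauge freedom in $\psi$ (only $\nabla\psi$ has intrinsic meaning): the $\rho$-Euler--Lagrange equation only determines $\partial_t\psi_t$ up to a $z$-independent constant, and absorbing this constant into $\psi_t$ yields the precise form stated in \eqref{eq:geod-equs}.
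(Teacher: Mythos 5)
Your derivation is correct, but it takes a slightly different route than the one the paper relies on. The paper states Proposition \ref{prop:geod} without proof, importing it from \cite{Ma11,EM11}, where the system \eqref{eq:geod-equs} is obtained from the Riemannian structure of Proposition \ref{prop:riem-properties}: since $\psi$ is precisely the (co)tangent coordinate, the geodesic flow is the Hamiltonian flow of $H(\rho,\psi)=\tfrac12\cA(\rho,\psi)$, the first equation being $\partial_t\rho = \partial H/\partial\psi$ (the continuity equation) and the second $\partial_t\psi = -\partial H/\partial\rho$, which immediately produces the $\partial_1\theta$ term after symmetrising with detailed balance; local existence and uniqueness then follow at once from Cauchy--Lipschitz on $\PXs\times\R^\cX$. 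You instead treat the Benamou--Brenier action $\int_0^1\cA(\rho_t,\psi_t)\,\mathrm{d}t$ as a constrained problem and identify the multiplier $\lambda_t=-2\psi_t$ (using that, for $\rho\in\PXs$ and $Q$ irreducible, the weighted operator $\nabla\cdot(\hrho\nabla\,\cdot\,)$ has only constants in its kernel), after which the $\rho$-variation reproduces the Hamilton--Jacobi-type equation; I checked the bookkeeping and the coefficients do come out right ($+1$ from the variation of $\cA$ and $-2$ from the constraint term give the $-\tfrac12\sum(\psi(x)-\psi(y))^2\partial_1\theta\,Q$ term). Two small points deserve explicit mention if you write this up: the admissible variations of $\rho$ must preserve $\sum_x\rho_t(x)\pi(x)=1$, so the $\rho$-Euler--Lagrange equation carries an extra $x$-independent multiplier --- this is exactly the gauge constant you already absorb into $\psi_t$; and to pass from ``critical points of the constrained action'' to ``the unique constant speed geodesic with prescribed initial tangent'' you should invoke that geodesics of the smooth Riemannian metric on $\PXs$ are locally minimizing and that solutions of \eqref{eq:geod-equs} are unique for given initial data, which is where the ODE argument you sketch actually enters. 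Your approach buys an explicit link to the variational definition of $\cW$ in Definition \ref{def:metric}; the Hamiltonian route of \cite{Ma11,EM11} is shorter because the multiplier step is built into the Legendre duality.
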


Let $\Delta$ be the discrete Laplacian associated with $Q$, i.e., for
a function $\psi : \cX \to \R$ we set
\begin{align}\label{eq:Laplace}
 \Delta \psi(x) :=  \sum_{y \in \cX} Q(x,y) (\psi(y) - \psi(x))\;.
\end{align}
The operator $\Delta$ is the generator of the continuous time Markov
semigroup associated with $Q$. Note that we have the usual formula
$\Delta = (\nabla \cdot) \nabla$.  By the reversibility assumption,
$\Delta$ is selfadjoint on $L^2(\cX,\pi)$.

 Let $f \in C([0,\infty);\R)$ be a strictly convex function, which is
 smooth on $(0,\infty)$. We consider the associated entropy functional
 $\cF : \PX \to \R$ defined by
\begin{align*}
 \cF(\rho) := \sum_{x \in \cX} f(\rho(x))\pi(x)\;.
\end{align*}

The following result explains the relevance of the non-local
transportation metrics $\cW$.

\begin{proposition}[Gradient flows]\label{prop:grad-flow}
  The heat flow generated by $\Delta$ is the gradient flow of the
  entropy with respect to $\cW$, provided that $\theta$ is given by
\begin{align}\label{eq:gen-log-mean}
 \theta(r,s) := \frac{r -  s}{f'(r) - f'(s)}\;.
\end{align}
\end{proposition}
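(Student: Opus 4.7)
The plan is to identify the Riemannian gradient $\grad_\cW\cF(\rho)$ at an interior point $\rho \in \PXs$ using the Riemannian description in Proposition~\ref{prop:riem-properties}, and then to verify that the special choice of $\theta$ in \eqref{eq:gen-log-mean} is exactly what converts $-\grad_\cW\cF$ into the linear Laplacian acting on $\rho$.

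For the first step, I would take a smooth curve $(\rho_t)$ through $\rho_0=\rho$ and let $\nabla\psi \in T_\rho$ be the tangent vector representing $\dot\rho_0$ via the continuity equation \eqref{eq:conditions}. Differentiating $\cF(\rho_t) = \sum_x f(\rho_t(x))\pi(x)$, substituting the continuity equation, and symmetrising in $(x,y)$ using detailed balance \eqref{eq:detailed-balance} together with the symmetry of $\hrho$, one obtains
\begin{align*}
\frac{d}{dt}\Big|_{t=0}\cF(\rho_t) = \frac12\sum_{x,y}\bigl(f'(\rho(y))-f'(\rho(x))\bigr)\bigl(\psi(y)-\psi(x)\bigr)\hrho(x,y)Q(x,y)\pi(x) = \ip{\nabla f'(\rho),\nabla\psi}_\rho.
\end{align*}
By the defining property of the Riemannian gradient this identifies $\grad_\cW\cF(\rho)$ with $\nabla f'(\rho) \in T_\rho$.

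For the second step, the gradient flow equation $\partial_t\rho_t = -\grad_\cW\cF(\rho_t)$ translates, via the tangent-space identification, into the continuity equation \eqref{eq:conditions} with the choice $\psi_t = -f'(\rho_t)$. Plugging this in gives
\begin{align*}
\partial_t\rho_t(x) = \sum_y\bigl(f'(\rho_t(y))-f'(\rho_t(x))\bigr)\hrho_t(x,y)Q(x,y).
\end{align*}
The decisive algebraic identity is that the choice \eqref{eq:gen-log-mean} of $\theta$ is engineered precisely so that $\hrho(x,y)\bigl(f'(\rho(y))-f'(\rho(x))\bigr) = \rho(y)-\rho(x)$, with the understanding that the ratio $(r-s)/(f'(r)-f'(s))$ extends continuously to $1/f''(r)$ on the diagonal $r=s$ (and both sides vanish there). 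Substituting produces $\partial_t\rho_t(x) = \sum_y(\rho_t(y)-\rho_t(x))Q(x,y) = \Delta\rho_t(x)$, which is the desired heat equation.

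There is no serious obstacle beyond bookkeeping: once the Riemannian gradient has been computed, the entire content of the proposition is the algebraic identity of the previous paragraph, which is tautologically built into the definition of $\theta$. The only technical subtlety is to ensure that the gradient flow stays in $\PXs$ so that the continuity-equation formulation and the smoothness of $f'$ are applicable throughout; this is a standard preservation-of-positivity property of the Markov semigroup generated by $\Delta$ together with strict positivity of $\theta$ away from $0$.
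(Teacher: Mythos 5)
Your argument is correct and is essentially the paper's own: the paper computes $\grad_\cW\cF(\rho)=\nabla f'(\rho)$ by exactly this differentiation along the continuity equation (Proposition~\ref{prop:grad-F}) and then uses the algebraic identity $\hrho\,\nabla f'(\rho)=\nabla\phi(\rho)$ built into $\theta_{\phi,f}$ (Theorem~\ref{thm:gradient-flow}), of which Proposition~\ref{prop:grad-flow} is the special case $\phi(r)=r$ that you have written out. No substantive difference in approach.
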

Let us note that among all weight functions $\theta$
of the form \eqref{eq:gen-log-mean}, the logarithmic mean
\eqref{eq:log-mean-intro} is the only one satisfying $\theta(r,r) =
r$.

\section{Discrete porous medium equations as gradient flows of the entropy}

In this note we shall be concerned with equations of porous
medium-type associated with a reversible Markov chain $(\cX, Q,
\pi)$.
The following assumption will be in force throughout this section.

\begin{assumption}\label{ass:phi}
The function $\phi : [0,\infty) \to \R$ is strictly increasing,
  $C^\infty$ on $(0,\infty)$, and continuous at $0$.
\end{assumption}

We shall study the \emph{discrete porous medium equation}
\begin{align}\label{eq:pm}
 \partial_t \rho_t = \Delta \phi(\rho_t)\;,
\end{align}
where $\Delta$ denotes the discrete Laplacian defined in
\eqref{eq:Laplace}.  Of course, if $\phi(r) = r$, we recover the usual
discrete heat equation associated with $Q$, which has been studied in
\cite{CHLZ11,Ma11,Mie11a}.  We shall analyse these equations using gradient flow
methods, first in a Hilbertian setting of discrete Sobolev spaces, and
then in a Riemannian setting of non-local transportation metrics.

\subsection{A Hilbertian gradient flow structure}

In order to apply Hilbertian gradient flow methods we shall introduce
a ``discrete $H^{-1}$-space'' $\sH^{-1}$.  First we observe that, by
the irreducibility assumption, $\Ker(\Delta) = \lin\{\one\}$, where
$\one$ denotes the function identically equal to $1$. Since $\Delta$
is selfadjoint on $L^2(\cX, \pi)$, it follows that the operator
$\Delta$ is bijective on
\begin{align*}
 \Ran(\Delta) := \Big\{ \psi : \cX \to \R \ | \ \sum_{x \in \cX} \psi(x) \pi(x) = 0 \Big\}\;.
\end{align*}
For $\psi, \psi_1, \psi_2 \in \Ran(\Delta)$ it thus makes sense to
define the inner product
\begin{align*}
 \ip{\psi_1, \psi_2}_{\sH^{-1}} := 
 \ip{-\Delta^{-1} \psi_1, \psi_2}_\pi\;,
\end{align*}
where $\ip{\cdot,\cdot}_\pi$ denotes the $L^2(\cX,\pi)$-inner product. The associated norm is given by
\begin{align*}
 \| \psi \|_{\sH^{-1}} 
   :=\sqrt{\ip{\psi, \psi}_{\sH^{-1}}}
    = \| \nabla \Delta^{-1} \psi \|_{\pi}
    = \sqrt{
    \frac12\sum_{x,y\in \cX} \big(\Delta^{-1}\psi(y)-\Delta^{-1}\psi(x)\big)^2Q(x,y) \pi(x)}\;.
\end{align*}
For arbitrary functions $\psi : \cX \to \R$ we set $c_\psi = \sum_{x
  \in \cX} \psi(x)\pi(x)$ and note that $\psi - c_\psi\one$ belongs to
$\Ran(\Delta)$. A Hilbertian norm on $\R^\cX$ can then be defined by
\begin{align*}
 \| \psi \|_{\sH^{-1}}
    := \sqrt{c_\psi^2  + \| \psi - c_\psi\one\|_{\sH^{-1}}^2}\;.
\end{align*}

The equation $\partial_t \rho_t = \Delta \phi(\rho_t)$ can now 
be studied using classical Hilbertian gradient flow arguments:

\begin{proposition}\label{prop:classical}
Let $\Phi(r) = \int_0^r \phi(s) \dd s$ and consider the functional 
$\PPhi : \R^{\cX} \to \R \cup \{+ \infty\}$ defined by
\begin{align*}
 \PPhi(\rho) := \sum_{x \in \cX} \Phi(\rho(x)) \pi(x)
\end{align*}
for $\rho \in \PX$, and $\PPhi(\rho) = + \infty$ otherwise.  Then the
functional $\PPhi$ is strictly convex on $\PX$ and attains its unique
minimum at $\one$.  As a consequence, the following assertions hold
for all $\bar\rho_0 \in \PX$:
\begin{enumerate}
\item Among all locally absolutely continuous curve $(\rho_t)_t$ in
  $\PX$ with $\rho_0 = \bar \rho_0$, there exists a unique one
  satisfying the evolution variational inequality 
\begin{align}\label{eq:hilbert-evi}
 \frac12 \ddt\| \rho_t - \sigma \|_{\sH^{-1}}^2 
    \leq \PPhi(\sigma) - \PPhi(\rho_t)
\end{align}
for all $\sigma \in \PX$ and a.e. $t \geq 0$.
\item For all $\bar\rho_0 \in
  \PX$ and a.e. $t\geq 0$ the curve $(\rho_t)$ satisfies the ordinary
  differential equation
\begin{align*}
 \partial_t \rho_t = \Delta \phi(\rho_t)\;.
\end{align*}
\item For all $\bar\rho_0 \in \PX$ we have $\rho_t \to \one$ as $t \to
  \infty$.
\end{enumerate}
\end{proposition}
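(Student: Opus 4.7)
The plan is to combine a strict-convexity argument with the Brezis--Komura theory of gradient flows for convex lower-semicontinuous functionals on a Hilbert space, and then to identify the $\sH^{-1}$-gradient of $\PPhi$ with $-\Delta\phi(\rho)$. First, $\Phi' = \phi$ is strictly increasing by Assumption~\ref{ass:phi}, so $\Phi$ is strictly convex on $[0,\infty)$ and consequently $\PPhi$ is a positively weighted sum of strictly convex functions of the single coordinates $\rho(x)$, hence strictly convex on $\R^\cX$. Jensen's inequality with respect to $\pi$ then gives
\[
  \PPhi(\rho) \geq \Phi\Big(\sum_{x\in\cX} \rho(x)\pi(x)\Big) = \Phi(1) = \PPhi(\one)
\]
for every $\rho \in \PX$, with equality only when $\rho \equiv \one$, which establishes the first part of the proposition.

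Next I would view $\PPhi$, extended by $+\infty$ outside $\PX$, as a proper lower-semicontinuous convex functional on the affine Hilbert space $\{\rho \in \R^\cX : \sum_x \rho(x)\pi(x) = 1\}$ equipped with the $\sH^{-1}$ inner product. Since its effective domain $\PX$ is closed and convex, the classical Brezis--Komura theorem yields, for every $\bar\rho_0 \in \PX$, a unique locally absolutely continuous curve $(\rho_t)$ with $\rho_0 = \bar\rho_0$ satisfying the differential inclusion $-\partial_t \rho_t \in \partial^{\sH^{-1}}\PPhi(\rho_t)$ for a.e.\ $t > 0$; the EVI~\eqref{eq:hilbert-evi} is the standard reformulation of this together with convexity of $\PPhi$, giving assertion~(i).

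For (ii), I compute the $\sH^{-1}$-gradient at an interior point $\rho \in \PXs$: setting $\eta := -\Delta\phi(\rho) \in \Ran(\Delta)$, for any $v \in \R^\cX$ with $\E_\pi[v] = 0$ one has
\[
  D\PPhi(\rho)[v] = \ip{\phi(\rho), v}_\pi = \ip{\phi(\rho) - \E_\pi[\phi(\rho)]\one, v}_\pi = \ip{-\Delta^{-1}\eta, v}_\pi = \ip{\eta, v}_{\sH^{-1}},
\]
so the $\sH^{-1}$-gradient of $\PPhi$ at $\rho$ is $-\Delta\phi(\rho)$ and the gradient flow equation reads $\partial_t \rho_t = \Delta\phi(\rho_t)$. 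For (iii), inserting $\sigma = \one$ into the EVI yields
\[
  \tfrac12\ddt \|\rho_t - \one\|_{\sH^{-1}}^2 \leq \PPhi(\one) - \PPhi(\rho_t) \leq 0,
\]
so integrating shows that $t \mapsto \PPhi(\rho_t) - \PPhi(\one)$ is non-negative and integrable on $[0,\infty)$; combined with the monotone decrease of $\PPhi(\rho_t)$ along the gradient flow, this forces $\PPhi(\rho_t) \downarrow \PPhi(\one)$, and strict convexity with the unique minimiser at $\one$ then yields $\rho_t \to \one$.

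The delicate step I anticipate is the ODE identification at a boundary point $\rho \in \PX \setminus \PXs$, where $\partial^{\sH^{-1}}\PPhi(\rho)$ may a priori be strictly larger than $\{-\Delta\phi(\rho)\}$. I would handle this by approximating $\bar\rho_0$ with interior initial data and passing to the limit using the $\sH^{-1}$-contraction provided by the EVI, or alternatively by a direct verification that $-\Delta\phi(\rho)$ is the minimal-norm element of $\partial^{\sH^{-1}}\PPhi(\rho)$, where the continuity of $\phi$ at $0$ from Assumption~\ref{ass:phi} is exactly what makes $\Delta\phi(\rho)$ well-defined up to the boundary.
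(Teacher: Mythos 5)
Your proposal follows essentially the same route as the paper: strict convexity of $\PPhi$ from monotonicity of $\phi$ (with Jensen's inequality for the minimiser at $\one$), the classical Hilbertian gradient-flow theory of Brezis for existence and uniqueness of the EVI curve, and the same computation identifying the $\sH^{-1}$-gradient of $\PPhi$ with $-\Delta\phi(\rho)$. The only differences are minor: the paper obtains (3) by citing the abstract convergence result of Ambrosio--Gigli--Savar\'e rather than your direct integration of the EVI with $\sigma=\one$, and it does not dwell on the boundary subtlety you flag at the end.
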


\begin{proof}
  Strict convexity of $\PPhi$ follows from the assumption that $\phi$
  is strictly increasing. Part (1) follows then from the theory of
  gradient flows of convex functionals in Hilbert spaces
  \cite[Th\'eor\`eme 3.1]{Bre73} or the more general theory in metric
  spaces \cite[Theorem 4.0.4]{AGS08}.  To prove (2), it suffices to
  compute the $\sH^{-1}$-gradient of $\PPhi$. To do this, note that
  for a smooth curve $(\rho_t)$ we have
\begin{align*}
 \ddt \PPhi(\rho_t) 
   =  \ddt \ip{\Phi(\rho_t), \one}_\pi
   =   \ip{\phi(\rho_t), \partial_t\rho_t}_\pi
   =   -\ip{\Delta\phi(\rho_t), \partial_t\rho_t}_{\sH^{-1}}\;,
\end{align*}
which shows that the $\sH^{-1}$-gradient of the functional $\PPhi$ at
$\rho \in\PX$ is given by $-\Delta\phi(\rho)$, as desired.

Since the gradient of the convex functional $\PPhi$ vanishes at
$\one$, it follows that $\PPhi$ attains it minimum at
$\one$. (Alternatively, this follows from Jensen's inequality.)
Uniqueness of the minimiser follows from the strict convexity of
$\PPhi$. Part (3) is then a consequence of \cite[Corollary
4.0.6]{AGS08}.
\end{proof}

\subsection{A gradient flow structure for non-local transportation metrics}

The gradient flow structure described in Proposition
\ref{prop:classical} can be regarded as a discrete analogue of the
$H^{-1}$-gradient flow structure of the porous medium equation.  Let
us now focus on different gradient flow structures for the same
equation, which are analogous to the Wasserstein-gradient flow
structure. We refer to \cite[Section 2]{O01} for a discussion of
different gradient flow structures for the porous medium equation in
the continuous setting.

\begin{assumption}\label{ass:f}
The function $f : [0,\infty) \to \R$ is strictly convex,
  $C^\infty$ on $(0,\infty)$, and continuous at $0$.
Moreover, the function $\theta_{\phi,f}$ defined by
\begin{align*}
 \theta_{\phi,f}(r,s) := \frac{\phi(r) - \phi(s)}{f'(r) - f'(s)}
\end{align*}
for $r \neq s$, extends to a weight function on $[0,\infty) \times [0,\infty)$ in the sense of Definition \ref{def:weight}.
\end{assumption}

The associated metric on $\PX$ will be denoted by 
$\cW_{\phi, f}$. Often, if confusion is unlikely to arise, we will  simply write $\cW$.

We consider the \emph{entropy functional} $\cF : \PX \to \R$ defined by
\begin{align}\label{eq:f-entropy}
 \cF(\rho) := \sum_{x \in \cX} f(\rho(x)) \pi(x)\;.
\end{align}
Of course, if $f(r) = r \log r$ we recover the usual relative entropy
with respect to $\pi$.

\begin{proposition}\label{prop:grad-F}
  Let $\cW$ be any non-local transportation metric. The $\cW$-gradient
  of the functional $\cF$ at $\rho \in\PXs$ is given by $\nabla
  f'(\rho)$.
\end{proposition}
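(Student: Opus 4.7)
The plan is to apply directly the Riemannian characterization of the gradient given by Proposition \ref{prop:riem-properties}: the gradient $\grad_\cW \cF(\rho)$ is the unique element $\nabla g \in T_\rho$ such that for every smooth curve $t \mapsto \rho_t \in \PXs$ with $\rho_0 = \rho$ and initial tangent vector $\nabla \psi_0$ (in the sense of the continuity equation \eqref{eq:conditions}(v)), one has
\begin{align*}
\ddt\bigg|_{t=0} \cF(\rho_t) = \ip{\nabla g, \nabla \psi_0}_\rho.
\end{align*}
Hence it suffices to carry out this differentiation and identify the natural candidate $g = f'(\rho)$.

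First, I would differentiate directly under the sum. Since $\rho \in \PXs$ has strictly positive entries, $f'$ is smooth near each $\rho(x)$, and the chain rule yields
\begin{align*}
\ddt \cF(\rho_t) = \sum_{x \in \cX} f'(\rho_t(x))\, \partial_t \rho_t(x)\, \pi(x) = \ip{f'(\rho_t),\, \partial_t \rho_t}_\pi.
\end{align*}
Next, I would substitute the continuity equation in its compact form $\partial_t \rho_t = -\nabla \cdot(\hrho_t \nabla \psi_t)$ and apply the discrete integration by parts formula $\ip{\phi, \nabla \cdot \Psi}_\pi = - \ip{\nabla \phi, \Psi}_\pi$ recorded in Section~\ref{sec:prelim}, with $\phi = f'(\rho_t)$ and $\Psi = \hrho_t \nabla \psi_t$. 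This gives, at $t = 0$,
\begin{align*}
\ddt\bigg|_{t=0} \cF(\rho_t) = -\ip{f'(\rho), \nabla \cdot(\hrho \nabla \psi_0)}_\pi = \ip{\nabla f'(\rho), \hrho \nabla \psi_0}_\pi,
\end{align*}
where the final edge inner product unfolds to $\tfrac12 \sum_{x,y} (f'(\rho)(y)-f'(\rho)(x))(\psi_0(y)-\psi_0(x))\hrho(x,y) Q(x,y) \pi(x)$, which is precisely $\ip{\nabla f'(\rho), \nabla \psi_0}_\rho$ by the definition of $\ip{\cdot,\cdot}_\rho$.

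Comparing the two displayed identities, the uniqueness clause of the tangent/gradient correspondence in Proposition~\ref{prop:riem-properties} immediately forces $\grad_\cW \cF(\rho) = \nabla f'(\rho)$. There is no real obstacle here: the argument is essentially an unpacking of definitions plus one integration by parts. The only point needing a moment's care is the regularity of $f'$ at the argument $\rho(x)$, which is ensured by the assumption $\rho \in \PXs$ (all values strictly positive) together with the smoothness of $f$ on $(0,\infty)$.
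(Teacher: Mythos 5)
Your proposal is correct and follows essentially the same route as the paper: differentiate $\cF$ along a smooth curve, substitute the continuity equation, integrate by parts to obtain $\ip{\nabla f'(\rho),\nabla\psi_0}_\rho$, and identify the gradient via the Riemannian structure of Proposition \ref{prop:riem-properties}. The only difference is that you spell out the chain rule and the regularity of $f'$ on $\PXs$ explicitly, which the paper leaves implicit.
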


Here we use the identification of the tangent space provided in
Proposition \ref{prop:riem-properties}.

\begin{proof}
  Let $\rho \in \PXs$ and pick a smooth curve $(\rho_t)$ satisfying
  the continuity equation
\begin{align*}
\partial_t \rho_t + \nabla \cdot (\hrho_t \nabla \psi_t) = 0\;,
\end{align*}
where we use the notation from Section \ref{sec:prelim}.
It then follows that
\begin{align*}
 \ddt \cF(\rho_t) 
   =  \ddt \ip{f(\rho_t), \one}_\pi
   =  -\ip{f'(\rho_t), \nabla \cdot (\hrho_t \nabla \psi_t) }_\pi
   =   \ip{\nabla f'(\rho_t), \nabla \psi_t}_{\rho_t}\;,
\end{align*}
hence the $\cW_{\phi, f}$-gradient of the functional $\cF$ at $\rho
\in\PXs$ is given by $\nabla f'(\rho)$.
\end{proof}

The next result is now an immediate consequence.

\begin{theorem}\label{thm:gradient-flow}
  The gradient flow equation 
  of the entropy functional $\cF$ with respect to the
  metric $\cW_{\phi,f}$ is given by the
  porous medium equation $\partial_t \rho_t = \Delta \phi(\rho_t)$.
\end{theorem}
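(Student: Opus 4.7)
The plan is to combine Proposition \ref{prop:grad-F}, the Riemannian identification in Proposition \ref{prop:riem-properties}, and the algebraic identity built into the definition of $\theta_{\phi,f}$ in Assumption \ref{ass:f}. By Proposition \ref{prop:grad-F}, the $\cW_{\phi,f}$-gradient of $\cF$ at $\rho \in \PXs$ is represented by the discrete gradient $\nabla f'(\rho) \in T_\rho$. The identification of tangent vectors with gradients of potentials via the continuity equation then translates the gradient flow equation $\partial_t \rho_t = -\grad_{\cW_{\phi,f}}\cF(\rho_t)$ into
\[
\partial_t \rho_t + \nabla \cdot (\hat\rho_t \nabla \psi_t) = 0, \qquad \psi_t = -f'(\rho_t).
\]

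The second and final step is a one-line algebraic reduction of the right-hand side to $\Delta \phi(\rho_t)$. Whenever $\rho(x) \neq \rho(y)$, the definition of $\theta_{\phi,f}$ yields
\[
\hat\rho(x,y)\bigl(f'(\rho(y)) - f'(\rho(x))\bigr) = \phi(\rho(y)) - \phi(\rho(x)),
\]
and the same identity persists at $\rho(x) = \rho(y)$ by continuous extension (both sides vanish). Inserting this into the divergence formula from Section \ref{sec:prelim} and using the symmetry of $\hat\rho$ produces
\[
\bigl(\nabla \cdot (\hat\rho \nabla f'(\rho))\bigr)(x) = \sum_{y \in \cX}\bigl(\phi(\rho(y)) - \phi(\rho(x))\bigr) Q(x,y) = \Delta \phi(\rho)(x).
\]
Together with the continuity equation this gives $\partial_t \rho_t = \Delta \phi(\rho_t)$, as desired.

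There is no genuine obstacle here: Assumption \ref{ass:f} is engineered precisely so that the weight $\theta_{\phi,f}$ turns the Riemannian gradient $\nabla f'(\rho)$ into the flux $-\nabla \phi(\rho)$. The only mild point worth noting is that the Riemannian identification of Proposition \ref{prop:riem-properties} is only available on the interior $\PXs$; but this is the natural setting for the gradient flow characterization, and existence and uniqueness of the flow itself are already covered by the Hilbertian framework of Proposition \ref{prop:classical}.
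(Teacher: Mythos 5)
Your proposal is correct and follows essentially the same route as the paper's proof: apply Proposition \ref{prop:grad-F}, use the identity $\hat\rho\,\nabla f'(\rho) = \nabla \phi(\rho)$ coming directly from the definition of $\theta_{\phi,f}$, and conclude via $\nabla\cdot\nabla = \Delta$ that the gradient flow equation reduces to $\partial_t\rho_t = \Delta\phi(\rho_t)$. Your explicit treatment of the diagonal case $\rho(x)=\rho(y)$ and the remark about restricting to $\PXs$ are just minor elaborations of the same argument.
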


\begin{proof}
  The gradient flow equation of the smooth functional $\cF$ on $\PXs$
  is given by
\begin{align*}
 \partial_t \rho_t + \nabla \cdot (\hrho_t \nabla \psi_t) = 0\;,\qquad 
 \nabla \psi_t = - \grad_\cW \cF(\rho_t)\;.
\end{align*}
Using Proposition \ref{prop:grad-F} and the fact that $\hrho\, \nabla
f'(\rho) = \nabla \phi(\rho)$ by the definition of $\theta_{\phi,f}$,
we infer that
\begin{align*}
 \nabla \cdot (\hrho_t \nabla \psi_t)
   = 
 - \nabla \cdot (\hrho_t \nabla f'(\rho_t))
   = - \Delta \phi(\rho_t)\;,
\end{align*}
hence the gradient flow equation reduces to the porous medium equation
$\partial_t \rho_t =\Delta \phi(\rho_t)$, as desired.
\end{proof}

Clearly, the gradient flow structure for the heat equation given in
Proposition \ref{prop:grad-flow} corresponds to the special case of
Theorem \ref{thm:gradient-flow} where $\phi(r) = r$.

Let us note that also the $\sH^{-1}$-gradient flow structure from
Proposition \ref{prop:classical} is a special case of Theorem
\ref{thm:gradient-flow}. Indeed, if $f = \Phi$, then $\cF =
\PPhi$. Moreover, the following result asserts that the
$\sH^{-1}$-distance on $\PX$ coincides with the non-local
transportation metric $\cW_\one$, which is given by Definition
\ref{def:metric} in the special case where $\theta(r,s) = 1$ for all
$r,s \geq 0$.

\begin{proposition}\label{prop:Hminus1}
For $\rho_0, \rho_1 \in \PX$ we have
\begin{align*}
 \| \rho_0 - \rho_1 \|_{\sH^{-1}}
   = \cW_\one(\rho_0, \rho_1)\;, 
\end{align*}
where $\cW_\one$ denotes the discrete transportation metric with
$\theta \equiv 1$.
\end{proposition}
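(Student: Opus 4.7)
My plan is to realise both sides as a pair of matching upper and lower bounds on the infimum defining $\cW_\one$. When $\theta\equiv 1$, the continuity equation in Definition \ref{def:metric}(v) collapses to $\partial_t\rho_t = \Delta\psi_t$ (since $\nabla\cdot\nabla = \Delta$), and the action becomes $\cA(\rho,\psi) = \|\nabla\psi\|_\pi^2$, independent of $\rho$. This linear structure is what makes the identity work.

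\textbf{Upper bound.} I would choose an explicit competitor for the infimum. Set $\rho_t := (1-t)\rho_0 + t\rho_1$, which is smooth and stays in $\PX$ by convexity. Since $\sum_x \rho_i(x)\pi(x) = 1$ for $i = 0,1$, the difference $\rho_1 - \rho_0$ lies in $\Ran(\Delta)$, so I can set $\psi_t \equiv \psi := \Delta^{-1}(\rho_1-\rho_0)$ as a constant-in-time field. Then $\partial_t\rho_t = \rho_1-\rho_0 = \Delta\psi_t$, so $(\rho,\psi) \in \CE(\rho_0,\rho_1)$, and the action equals $\|\nabla\psi\|_\pi^2 = \|\nabla\Delta^{-1}(\rho_1-\rho_0)\|_\pi^2 = \|\rho_0-\rho_1\|_{\sH^{-1}}^2$.

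\textbf{Lower bound.} For any admissible $(\rho_t,\psi_t) \in \CE(\rho_0,\rho_1)$, I observe the key identity
\begin{align*}
\|\Delta\psi\|_{\sH^{-1}} = \|\nabla\psi\|_\pi,
\end{align*}
valid for every $\psi \in \R^\cX$: after subtracting the mean we may assume $\psi \in \Ran(\Delta)$, and then $\|\Delta\psi\|_{\sH^{-1}}^2 = \ip{-\Delta^{-1}\Delta\psi,\Delta\psi}_\pi = \ip{-\psi,\Delta\psi}_\pi = \|\nabla\psi\|_\pi^2$ by the integration by parts formula recalled in Section \ref{sec:prelim}. Using $\rho_1 - \rho_0 = \int_0^1 \partial_t\rho_t\, \dd t = \int_0^1 \Delta\psi_t\, \dd t$, the triangle inequality for $\|\cdot\|_{\sH^{-1}}$ followed by Cauchy--Schwarz gives
\begin{align*}
\|\rho_0-\rho_1\|_{\sH^{-1}}
\leq \int_0^1 \|\Delta\psi_t\|_{\sH^{-1}}\, \dd t
= \int_0^1 \|\nabla\psi_t\|_\pi\, \dd t
\leq \Big(\int_0^1 \|\nabla\psi_t\|_\pi^2\, \dd t\Big)^{1/2}.
\end{align*}
Taking the infimum over $\CE(\rho_0,\rho_1)$ yields $\|\rho_0-\rho_1\|_{\sH^{-1}}^2 \leq \cW_\one(\rho_0,\rho_1)^2$.

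The two bounds combine to give the equality. There is no real obstacle: the identity $\|\Delta\psi\|_{\sH^{-1}} = \|\nabla\psi\|_\pi$ is the one technical ingredient, but it is just the definition of $\sH^{-1}$ combined with the discrete integration by parts formula, and the proof is essentially the standard Benamou--Brenier-type characterisation of a negative Sobolev norm transcribed to the discrete setting.
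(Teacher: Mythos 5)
Your argument is correct, but it proceeds differently from the paper. The paper appeals to the geodesic equations (Proposition \ref{prop:geod}): for $\theta\equiv 1$ these reduce to $\partial_t\rho_t=-\Delta\psi_t$, $\partial_t\psi_t=0$, so the $\cW_\one$-geodesic is the linear interpolation with a time-independent potential, and the distance is obtained by evaluating the action along this minimizing curve. You instead run a Benamou--Brenier-type two-bound argument: the same linear interpolation with constant $\psi$ serves only as a competitor for the upper bound, while the lower bound comes from the identity $\|\Delta\psi\|_{\sH^{-1}}=\|\nabla\psi\|_\pi$ together with $\rho_1-\rho_0=\int_0^1\partial_t\rho_t\,\dd t$, Minkowski's integral inequality and Cauchy--Schwarz, applied to an \emph{arbitrary} admissible pair. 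What your route buys is that it never invokes the geodesic equations or the attainment of the infimum by a geodesic (issues which, strictly speaking, require the existence result of Proposition \ref{prop:W-metric} and some care near the boundary of $\PX$, since Proposition \ref{prop:geod} is local and stated for $\PXs$); it is entirely self-contained given the definition of $\cW_\one$ and discrete integration by parts. What the paper's route buys is brevity and the additional geometric information that the $\cW_\one$-geodesics are exactly the linear interpolations. One small slip: with the paper's sign conventions the continuity equation for $\hrho\equiv 1$ reads $\partial_t\rho_t+\Delta\psi_t=0$, i.e.\ $\partial_t\rho_t=-\Delta\psi_t$, not $\partial_t\rho_t=\Delta\psi_t$; this is harmless since the action is quadratic in $\psi$ (replace $\psi$ by $-\psi$ throughout, e.g.\ take $\psi=\Delta^{-1}(\rho_0-\rho_1)$ in the upper bound), but the sign should be fixed.
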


\begin{proof}
  Note that the geodesic equations \eqref{eq:geod-equs} for $\cW_\one$
  reduce to
\begin{align*}
 \partial_t \rho_t = - \Delta \psi_t\;, \qquad
 \partial_t \psi_t = 0\;.
\end{align*}
Since $\psi$ does not depend on $t$, we infer that the
$\cW_\one$-geodesic between $\rho_0$ and $\rho_1$ in $\PX$ is given by
linear interpolation, i.e., $\rho_t = (1-t) \rho_0 + \rho_1$.  It
follows that $\Delta\psi_t = \rho_0 - \rho_1$ for all $t \in [0,1]$,
and since $\| \cdot \|_{\rho_t} = \| \cdot\|_\one$ we obtain
\begin{align*}
 \cW_\one(\rho_0, \rho_1)^2 
    = \int_0^1 \| \nabla \psi_t \|_{\rho_t}^2 \dd t
    = \| \nabla \Delta^{-1}(\rho_0 - \rho_1) \|_{\pi}^2
    =  \| \rho_0 - \rho_1 \|_{\sH^{-1}}^2\;.
\end{align*}
\end{proof}

\subsection{Porous medium equations and R\'enyi entropy}
\label{sec:Potenzmittel}
Let us now specialise to a particularly interesting setting, motivated
by the Wasserstein gradient flow structure for porous medium equations
of the form $\partial_t \rho_t= \Delta \rho_t^m$ in $\R^n$ from
\cite{O01}.

Let $0 < m \leq 2$ and $m \neq 1$, and consider the functions
\begin{align*}
 \phi(r) = \phi_m(r) = r^m\;,\qquad  f(r) = f_m(r) = \frac{1}{m-1} r^m\;.
\end{align*}
In this case the porous medium equation and the entropy functional are given by
\begin{align*}
\partial_t \rho_t = \Delta \rho_t^m\;,\qquad
 \cF(\rho) =  \cF_m(\rho) := \frac{1}{m-1}\sum_{x \in \cX} \rho(x)^m \pi(x)\;,
\end{align*}
thus $\cF_m$ is the usual R\'enyi entropy. The weight function is given by
\begin{align}\label{eq:thetam}
 \theta_m(r,s) := \theta_{\phi_m,f_m}(r,s) 
			  := \frac{m-1}{m}\frac{r^m - s^m}{r^{m-1} - s^{m-1}}\;,
\end{align}
and we let $\cW_m$ denote the associated non-local transportation metric. 
Note that 
\begin{align*}
 \lim_{m \to 1} \theta_m(r,s)  = \theta_1(r,s)\;,
\end{align*}
where $\theta_1$ denotes the logarithmic mean defined in
\eqref{eq:log-mean-intro}. The weight function $\theta_m$ admits the remarkable integral representation
\begin{align}\label{eq:integral-rep}
 \theta_m(r,s)= 
   \int_0^1 \Big( (1-\alpha) r^{m-1} + \alpha s^{m-1} \Big)^{\frac{1}{m-1}} \dd \alpha\;,
\end{align}
which can be checked by an explicit calculation and naturally generalises \eqref{eq:log-mean-intro}.
We collect here some basic properties of the weight functions
associated to the R\'enyi entropy.

\begin{lemma}\label{lem:theta-m}
 Let $0 < m \leq 2$. Then the function $\theta_m$ defines a 
  weight function with the following additional properties:
 \begin{enumerate}
 \item (Homogeneity) $\theta_m(\lambda s,\lambda t)=\lambda
   \theta_m(s,t)$ for $\lambda>0$ and $s,t>0$.
 \item (Monotonicity in $m$) For $m<m'$ we have $\theta_m(s,t) \leq
   \theta_{m'}(s,t)$ for all $s,t>0$.
 \item (Zero at the boundary) $\theta_m(0,t)=0$ for all
   $t\geq0$ if and only if $m\leq 1$.
 \end{enumerate}
\end{lemma}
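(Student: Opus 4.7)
The plan is to exploit the integral representation \eqref{eq:integral-rep}, which is the cleanest entry point to every claim. Setting $p := m - 1 \in (-1,0) \cup (0,1]$ and writing
\[
M_p^\alpha(r,s) := \bigl((1-\alpha)r^p + \alpha s^p\bigr)^{1/p}
\]
for the $\alpha$-weighted $p$-power mean, \eqref{eq:integral-rep} reads $\theta_m(r,s) = \int_0^1 M_p^\alpha(r,s)\,d\alpha$. Every assertion in the lemma will reduce to a standard property of weighted power means, inherited by $\theta_m$ upon averaging over $\alpha$. The case $m = 1$, where the raw formula \eqref{eq:thetam} is indeterminate, is subsumed since the integrand extends continuously at $p = 0$ to the weighted geometric mean $r^{1-\alpha}s^\alpha$, recovering the logarithmic mean.

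For the weight-function axioms: smoothness of $M_p^\alpha$ on $(0,\infty)^2$, symmetry via the substitution $\alpha \mapsto 1-\alpha$, and strict positivity for $r,s > 0$ yield (A1)--(A3), with the continuous extension up to the boundary justified by dominated convergence. For (A4), one computes
\[
\partial_s M_p^\alpha(r,s) = \alpha s^{p-1}\bigl((1-\alpha)r^p + \alpha s^p\bigr)^{\frac{1}{p}-1} \geq 0
\]
and integrates in $\alpha$. The concavity axiom (A5) is the key step: I would invoke the classical fact that $M_p^\alpha$ is concave on $(0,\infty)^2$ precisely when $p \leq 1$, verifiable either by computing the Hessian directly or by applying the reverse Minkowski inequality. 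Concavity is preserved by averaging in $\alpha$, and the boundary case extends by continuity.

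Homogeneity (1) is immediate from $M_p^\alpha(\lambda r,\lambda s) = \lambda M_p^\alpha(r,s)$, which follows by factoring $\lambda^p$ out of the bracket and raising to $1/p$. Monotonicity in $m$ (2) reduces to the classical monotonicity of $p \mapsto M_p^\alpha(r,s)$ at fixed weights, a consequence of Jensen's inequality applied to $x \mapsto x^{q/p}$ for $q \geq p$; averaging over $\alpha$ transfers the bound to $\theta_m$.

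For property (3) I would argue directly from \eqref{eq:thetam}. For $m > 1$ we compute $\theta_m(0,t) = \frac{m-1}{m}\cdot\frac{-t^m}{-t^{m-1}} = \frac{m-1}{m}\,t$, which is strictly positive for $t > 0$. For $0 < m < 1$, the denominator $r^{m-1} - t^{m-1}$ diverges as $r \to 0^+$ while the numerator stays bounded, whence $\theta_m(0,t) = 0$; the boundary case $m = 1$ follows from the analogous limit for the logarithmic mean. The main technical obstacle throughout is concavity (A5); once the power-mean picture is set up, everything else is essentially bookkeeping.
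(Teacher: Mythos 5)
Your proposal is correct and follows essentially the same route as the paper: both rest on the integral representation \eqref{eq:integral-rep}, obtaining concavity from the (negative semi-definite) Hessian of the weighted power-mean integrand $\big((1-\alpha)r^{m-1}+\alpha s^{m-1}\big)^{1/(m-1)}$ (equivalently, classical concavity of power means for exponent $p=m-1\leq 1$), and monotonicity in $m$ from $L^p$-monotonicity/Jensen on the two-point space with weights $(1-\alpha,\alpha)$. The remaining axioms and properties (1) and (3) are handled by the same direct computations as in the paper, so there is nothing to add.
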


\begin{proof}
  Properties (A1) -- (A4) in Definition \ref{def:weight} are obvious
  from either \eqref{eq:thetam} or \eqref{eq:integral-rep}.

  Concavity of $\theta_m$ follows from \eqref{eq:integral-rep}, since
  an explicit computation shows that the Hessian of the function
\begin{align*}
F : (r,s) \mapsto
   \int_0^1 \Big( (1-\alpha) r^{m-1} + \alpha s^{m-1} \Big)^{\frac{1}{m-1}}
\end{align*}
is given by
\begin{align*}
 \Hess F(r,s) =(1-\alpha) \alpha (2-m) 
     \Big( (1-\alpha) r^{m-1} + \alpha s^{m-1} \Big)^{\frac{1}{m-1}-2}
     r^{m-2} s^{m-2}
\left(\begin{array}{cc}-s^2 & rs \\rs & -r^2\end{array}\right)\;,
\end{align*}
which is negative semi-definite for all $\alpha \in [0,1]$.

Property (2) follows from the monotonicity in $p$ of $L^p$-``norms'',
applied on a two-point space $\{0,1\}$ with probability measure
$\mu_\alpha := (1-\alpha)\delta_0 + \alpha \delta_1$. This
monotonicity follows from Jensen's inequality and holds for negative
$p$ as well.

Finally, (3) follows from an explicit computation and (1) is obvious.
\end{proof}

Let us note that $\theta_m$ is not a weight function for $m > 2$,
since in this case Minkowski's inequality for $L^{m-1}$-norms implies
that $\theta_m$ is convex.

\begin{corollary}\label{cor:finite-distance}
  For any reversible Markov chain $(\cX, Q, \pi)$ 
  the distance $\cW_m$ is finite and non-increasing in
  $m$. More precisely, for any $0 < m \leq m'\leq 2$ and $\rho_0,\rho_1\in\PX$ we
  have 
  \begin{align*}
    \cW_{m'}(\rho_0,\rho_1) \leq \cW_{m}(\rho_0,\rho_1) < \infty\ .
  \end{align*}
\end{corollary}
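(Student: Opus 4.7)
The statement splits cleanly into finiteness and monotonicity, with quite different flavors. Finiteness is essentially automatic: Lemma \ref{lem:theta-m} verifies (A1)--(A5) for $\theta_m$ with $0 < m \leq 2$, and Proposition \ref{prop:W-metric} then delivers $\cW_m(\rho_0,\rho_1) < \infty$ at once. The real content is the monotonicity $\cW_{m'} \leq \cW_m$, whose driving input is Lemma \ref{lem:theta-m}(2): $\theta_{m'} \geq \theta_m$ pointwise.

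A naive attempt to compare actions of a single admissible pair would fail, since the continuity equation (v) involves $\hat\rho$, so an admissible pair for $\cW_m$ is not admissible for $\cW_{m'}$. The right move is to keep the curve $\rho$ and replace the potential $\psi$ by the one adapted to $\theta_{m'}$. For this I would, at each time $t$ with $\rho_t \in \PXs$, introduce the weighted Laplacian
\[
\cD^{\rho_t}_\theta \psi(x) := -\sum_{y}\big(\psi(y)-\psi(x)\big)\theta(\rho_t(x),\rho_t(y)) Q(x,y),
\]
which by detailed balance and irreducibility is self-adjoint and positive semi-definite on $L^2(\cX,\pi)$ with kernel $\R\one$. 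The continuity equation (v) becomes $\cD^{\rho_t}_\theta \psi_t = \partial_t \rho_t$, and the action admits the dual representation $\cA(\rho_t,\psi_t) = \bip{(\cD^{\rho_t}_\theta)^{-1}\partial_t\rho_t, \partial_t\rho_t}_\pi$, where the inverse is taken on the $\pi$-mean-zero subspace (on which $\partial_t\rho_t$ automatically lies). The bound $\theta_{m'} \geq \theta_m$ then lifts to the operator inequality $\cD^{\rho_t}_{m'} \geq \cD^{\rho_t}_m$, hence $(\cD^{\rho_t}_{m'})^{-1} \leq (\cD^{\rho_t}_m)^{-1}$ on the mean-zero subspace, since $A \mapsto A^{-1}$ reverses order on positive operators. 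Given $(\rho,\psi)$ admissible for $\cW_m$ with $\rho_t \in \PXs$, setting $\tilde\psi_t := (\cD^{\rho_t}_{m'})^{-1}\partial_t\rho_t$ yields a pair $(\rho,\tilde\psi)$ admissible for $\cW_{m'}$ with $\cA_{m'}(\rho_t,\tilde\psi_t) \leq \cA_m(\rho_t,\psi_t)$ at every $t$. Integration over $t$ and minimisation over $(\rho,\psi)$ then give $\cW_{m'}(\rho_0,\rho_1) \leq \cW_m(\rho_0,\rho_1)$.

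The main obstacle is that this construction requires $\rho_t \in \PXs$ throughout, to guarantee invertibility of the weighted Laplacians on the mean-zero subspace. I would handle this by a standard density argument: perturb a near-optimal $(\rho,\psi)$ by $\rho^\varepsilon_t := (1-\varepsilon)\rho_t + \varepsilon\one$, reconnect the endpoints to $\bar\rho_0,\bar\rho_1$ via short auxiliary segments of action vanishing in $\varepsilon$, and use continuity of $\theta_m$ to pass to the limit $\varepsilon \to 0$. An equivalent route avoiding any inversion is the flux reformulation of the metric: one minimises $\frac{1}{2}\sum_{x,y} J_t(x,y)^2/\theta(\rho_t(x),\rho_t(y)) \cdot Q(x,y)\pi(x)$ over antisymmetric $J$ satisfying $\partial_t\rho_t(x) + \sum_y J_t(x,y)Q(x,y) = 0$, and the pointwise bound $1/\theta_{m'} \leq 1/\theta_m$ (with $1/0 := \infty$) makes $\cW_{m'}\leq\cW_m$ immediate.
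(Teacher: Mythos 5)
Your argument is correct, and your closing remark is in fact the paper's entire proof: the authors simply combine the monotonicity $\theta_m \leq \theta_{m'}$ from Lemma \ref{lem:theta-m}(2) with the equivalent flux-based (``momentum'') formulation of $\cW$ from \cite[Lemma 2.9]{EM11}, in which the admissible class (the continuity equation written with an antisymmetric flux $J$) does not depend on $\theta$ and the action $\tfrac12\sum_{x,y} J(x,y)^2/\hrho(x,y)\,Q(x,y)\pi(x)$ is pointwise non-increasing in $\theta$; finiteness is, as you say, Lemma \ref{lem:theta-m} plus Proposition \ref{prop:W-metric}. Your primary route --- writing the action in dual form as $\bip{(\cD^{\rho}_\theta)^{-1}\partial_t\rho_t,\partial_t\rho_t}_\pi$ via the weighted Laplacian and invoking order-reversal of inversion for positive definite operators --- is a genuinely different, self-contained derivation of the same comparison, and it is sound. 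Its cost is exactly the complication you flag: invertibility forces $\rho_t \in \PXs$, so you need the interior perturbation $\rho^\varepsilon = (1-\varepsilon)\rho + \varepsilon\one$ with reconnecting segments, and making that limit rigorous takes slightly more than ``continuity of $\theta_m$'': one should use homogeneity and concavity to get $\theta_{m'}(\rho^\varepsilon(x),\rho^\varepsilon(y)) \geq (1-\varepsilon)\theta_{m}(\rho(x),\rho(y))$, hence an action bound with factor $(1-\varepsilon)$ via the variational (sup) characterisation of the dual form, together with the fact from \cite{Ma11} (already used in Proposition \ref{prop:W-metric}) that $\cW_{m'}\bigl(\bar\rho_i,(1-\varepsilon)\bar\rho_i+\varepsilon\one\bigr) \to 0$, so the reconnection cost vanishes. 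The flux formulation buys you precisely the avoidance of all of this: with the convention $0^2/0=0$, the bound $1/\theta_{m'} \leq 1/\theta_m$ makes the comparison termwise and insensitive to the boundary of $\PX$, which is why the paper's proof is two lines.
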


\begin{proof}
  This is an immediate consequence of the monotonicity of $\theta_m$ stated in
  Lemma \ref{lem:theta-m} combined with the equivalent definition of the
  distance $\cW$ given in \cite[Lemma 2.9]{EM11}.
\end{proof}

Applied to $\phi = \phi_m$ and $f = f_m$,
Theorem \ref{thm:gradient-flow} reduces to the following result.

\begin{corollary}\label{thm:gradient-flow-potenz}
  Let $0 < m \leq 2$. Then the gradient flow equation of the R\'enyi
  entropy $\cF_m$ with respect to the metric $\cW_m$ is the porous
  medium equation $\partial_t \rho_t = \Delta \rho_t^m$.
\end{corollary}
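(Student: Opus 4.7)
The plan is to apply Theorem \ref{thm:gradient-flow} with $\phi = \phi_m$ and $f = f_m$, so that the work reduces to checking that Assumption \ref{ass:phi} is met for $\phi_m$, Assumption \ref{ass:f} is met for $f_m$, and that the weight function generated by the pair $(\phi_m,f_m)$ coincides with $\theta_m$.

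For $m \neq 1$, the function $\phi_m(r)=r^m$ is strictly increasing on $[0,\infty)$, smooth on $(0,\infty)$, and continuous at $0$, so Assumption \ref{ass:phi} is immediate. For $f_m(r)=\frac{1}{m-1}r^m$ one has $f_m''(r)=mr^{m-2}>0$ on $(0,\infty)$ (regardless of the sign of $m-1$), so $f_m$ is strictly convex; it is manifestly smooth on $(0,\infty)$ and continuous at $0$. A direct computation using $f_m'(r)=\frac{m}{m-1}r^{m-1}$ gives
\begin{align*}
\theta_{\phi_m,f_m}(r,s)=\frac{r^m-s^m}{\frac{m}{m-1}(r^{m-1}-s^{m-1})}=\frac{m-1}{m}\cdot\frac{r^m-s^m}{r^{m-1}-s^{m-1}}=\theta_m(r,s),
\end{align*}
so the weight function prescribed by Assumption \ref{ass:f} is precisely $\theta_m$ as introduced in \eqref{eq:thetam}. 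The fact that $\theta_m$ satisfies the full list of axioms in Definition \ref{def:weight} for $0 < m \leq 2$, in particular the non-trivial concavity property (A5), is exactly the content of Lemma \ref{lem:theta-m}. Theorem \ref{thm:gradient-flow} then delivers the gradient flow equation $\partial_t\rho_t=\Delta\phi_m(\rho_t)=\Delta\rho_t^m$.

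The boundary case $m=1$ is excluded in the explicit formulas for $\phi_m$ and $f_m$ but is recovered as a limit: one has $\lim_{m\to 1}\theta_m=\theta_1$ (the logarithmic mean) and $\lim_{m\to 1}f_m(r)=r\log r$, so the claim in this case reduces to Proposition \ref{prop:grad-flow}. There is no genuine obstacle in this proof: the only non-trivial verification, concavity of $\theta_m$ for $0 < m \leq 2$, has already been carried out in Lemma \ref{lem:theta-m}, and everything else is a direct specialisation of Theorem \ref{thm:gradient-flow}.
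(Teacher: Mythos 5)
Your proof is correct and follows exactly the route the paper takes: Corollary \ref{thm:gradient-flow-potenz} is obtained by specialising Theorem \ref{thm:gradient-flow} to $\phi=\phi_m$, $f=f_m$, with the only non-trivial ingredient --- that $\theta_{\phi_m,f_m}=\theta_m$ is a genuine weight function, in particular concave for $0<m\leq 2$ --- supplied by Lemma \ref{lem:theta-m}. Your explicit verification of Assumptions \ref{ass:phi} and \ref{ass:f} and your remark on the case $m=1$ (recovered via Proposition \ref{prop:grad-flow}) are consistent with, and slightly more detailed than, the paper's one-line argument.
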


\section{Geodesic \texorpdfstring{$\kappa$}{k}-convexity of entropy
  functionals}\label{sec:convexity}

In this section we analyse convexity properties of entropy functionals
along geodesics for non-local transportation metrics. We fix a
reversible Markov chain $(\cX, Q, \pi)$, and we fix $\phi, f :
[0,\infty) \to \R$ satisfying Assumptions \ref{ass:phi} and
\ref{ass:f}. Throughout this section we set
\begin{align}\label{eq:theta-phi-f}
 \theta(r,s) = \theta_{\phi, f}(r,s) = \frac{\phi(r) - \phi(s)}{f'(r) - f'(s)}\;.
\end{align}
Let $\cW$ be the associated metric, and let $\cF$ be the entropy
functional defined in \eqref{eq:f-entropy}.

\subsection{The Hessian of entropy functionals}

We start by calculating the Hessian of an entropy functional $\cF$ in
the Riemannian structure induced by $\cW$.  For a probability density
$\rho \in \PXs$ we introduce the function $\hDelta_\phi \rho : \cX
\times \cX \to \R$ defined by
\begin{align*}
\hDelta_\phi \rho(x,y) :=  
 \partial_1 \theta(\rho(x), \rho(y)) \Delta \phi(\rho)(x)
  + \partial_2 \theta(\rho(x), \rho(y)) \Delta \phi(\rho)(y)\;.
\end{align*}
Furthermore, for a function $\psi :  \cX \to \R$ we set
\begin{equation}\begin{aligned}\label{eq:B}
 \cB(\rho, \psi) 
&= \ \frac12
  \bip{\hDelta_\phi\rho\, \cdot\, \nabla \psi, \nabla \psi }_\pi
    - \bip{\hrho  \cdot\,\nabla \psi\ ,\, \nabla (\phi'(\rho)\Delta\psi)}_\pi 
\\ & = \ 
  \frac14
       \sum_{x,y,z \in \cX} 
         \big(\psi(x) - \psi(y)\big)^2 Q(x,y) \pi(x)
         \\ & \qquad \times
        \Big( \partial_1 \theta\big(\rho(x), \rho(y)\big) \big(\phi(\rho(z)) - \phi(\rho(x)) \big)Q(x,z)
 \\& \qquad\quad
    + \partial_2 \theta\big(\rho(x), \rho(y)\big) \big(\phi(\rho(z)) - \phi(\rho(y)) \big)Q(y,z)\Big)
  \\ &  - 
   \frac12
       \sum_{x,y,z \in \cX}
          \big(\psi(x) - \psi(y)\big)
        \hrho(x,y)Q(x,y)\pi(x)
  \\& \qquad \times  
   \Big( \phi'(\rho(x)) Q(x,z) \big( \psi(z) - \psi(x) \big)
           -   \phi'(\rho(y)) Q(y,z) \big( \psi(z) -\psi(y)  \big) \Big)  \;.
\end{aligned}\end{equation}
The proof of the following result is an adaptation of \cite[Proposition 4.2]{EM11}.

\begin{proposition}\label{prop:Hessian}
For $\rho \in \PXs$ and $\psi \in \R^\cX$ we have 
\begin{align*}
\bip{ \Hess \cF(\rho) \nabla \psi\ ,\, \nabla \psi}_\rho
    = \cB(\rho,\psi)\;.
\end{align*}

\begin{proof}
  Take $(\rho_t , \psi_t)_t$ satisfying the geodesic equations
  \eqref{eq:geod-equs}.
Using the continuity equation and \eqref{eq:theta-phi-f} we obtain
\begin{align*}
 \ddt \cF(\rho_t)
    &=  - \bip{ f'(\rho_t), \nabla\cdot( \hat\rho_t \nabla \psi_t) }_\pi  \\&  =  \bip{ \nabla f'(\rho_t) ,  \hat\rho_t \cdot \nabla \psi_t }_\pi
  \\&  =  \bip{ \nabla \phi(\rho_t) ,  \nabla \psi_t }_\pi\;.
\end{align*}
Therefore the second derivative of the entropy is given by
\begin{align*}
  \ddtt \cF(\rho_t)
    & =  \bip{ \nabla \partial_t \phi(\rho_t) ,  \nabla \psi_t }_\pi
        + \bip{ \nabla \phi(\rho_t) ,  \nabla \partial_t \psi_t }_\pi 
   \\&  = -  \bip{  \partial_t \phi(\rho_t) ,  \Delta \psi_t }_\pi
         -  \bip{ \Delta  \phi(\rho_t) ,  \partial_t \psi_t }_\pi \ .
\end{align*}
Using the continuity equation we obtain
\begin{align*}
  \bip{  \partial_t \phi(\rho_t) ,  \Delta \psi_t }_\pi
   & =   \bip{   \phi'(\rho_t) \partial_t \rho_t,  \Delta \psi_t }_\pi
 \\& = - \bip{   \nabla\cdot( \hat\rho \nabla \psi_t) , \phi'(\rho_t) \Delta \psi_t }_\pi
 \\& =   \bip{ \hat\rho_t \nabla \psi_t ,  \nabla(\phi'(\rho_t) \Delta \psi_t ) }_\pi\;.
\end{align*}
Furthermore, applying the geodesic equations \eqref{eq:geod-equs} and
the detailed balance equations \eqref{eq:detailed-balance}, we infer
that
\begin{align*}
 \bip{ \Delta \phi(\rho_t) ,  \partial_t \psi_t }
 & 
  =
   -\frac12
       \sum_{x,y,z \in \cX}
         \big(\psi_t(x) - \psi_t(y)\big)^2 \partial_1 \theta\big(\rho_t(x), \rho_t(y)\big) 
\\& \qquad\qquad\qquad  \times         \big(\phi(\rho_t(z))  - \phi(\rho_t(x))\big) Q(x,y) Q(x,z) \pi(x)
 \\&  =
   -\frac14
       \sum_{x,y,z \in \cX}
         \big(\psi_t(x) - \psi_t(y)\big)^2 
        \Big( \partial_1 \theta\big(\rho_t(x), \rho_t(y)\big) \big(\phi(\rho_t(z)) - \phi(\rho_t(x)) \big)Q(x,z)
 \\& \qquad\qquad\qquad
   + \partial_2 \theta\big(\rho_t(x), \rho_t(y)\big) \big(\phi(\rho_t(z)) - \phi(\rho_t(y)) \big)Q(y,z)\Big)
       Q(x,y)  \pi(x)
 \\& = -\frac12 \bip{\hDelta_\phi\rho_t\, \cdot\, \nabla \psi_t, \nabla \psi_t }_\pi\;.
\end{align*}
Combining the latter three identities, we obtain
\begin{align*}
\bip{ \Hess \cF(\rho_t) \nabla \psi_t\ ,\, \nabla \psi_t}_{\rho_t}
  &= \ddtt \cF(\rho_t)
  \\&   = - \bip{  \hat\rho_t \cdot   \nabla \psi_t ,  \nabla\Delta \psi_t }_{\pi}
        +  \frac12 \bip{\hDelta_\phi\rho_t\, \cdot\, \nabla \psi_t, \nabla \psi_t }_\pi\;,
\end{align*}
which is the desired identity.
\end{proof}
\end{proposition}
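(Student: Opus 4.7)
The plan is to exploit the Riemannian structure from Proposition \ref{prop:riem-properties} and compute the Hessian as the second time derivative of $\cF$ along the geodesic issuing from $\rho$ in direction $\nabla\psi$. Concretely, I would pick $(\rho_t,\psi_t)$ solving the geodesic system \eqref{eq:geod-equs} with $\rho_0=\rho$ and $\psi_0=\psi$, and then use the standard identity
\[
 \bip{\Hess\cF(\rho)\nabla\psi,\nabla\psi}_\rho \;=\; \ddtt\bigg|_{t=0}\cF(\rho_t),
\]
which is valid along constant-speed geodesics because the initial covariant derivative of the velocity vanishes.

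The first derivative is easy: differentiating $\cF(\rho_t)=\ip{f(\rho_t),\one}_\pi$, using the continuity equation from \eqref{eq:geod-equs} and discrete integration by parts, yields $\frac{d}{dt}\cF(\rho_t)=\ip{\nabla f'(\rho_t),\nabla\psi_t}_{\rho_t}$. The key algebraic input is the defining identity $\hrho\,\nabla f'(\rho)=\nabla\phi(\rho)$ coming from \eqref{eq:theta-phi-f}, which converts this to $\ip{\nabla\phi(\rho_t),\nabla\psi_t}_\pi$, i.e.\ $-\ip{\Delta\phi(\rho_t),\psi_t}_\pi$. This form is convenient because it moves all $\rho$-dependence into $\phi(\rho_t)$ and separates it cleanly from $\psi_t$.

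For the second derivative I would apply the product rule, producing two terms: $-\ip{\partial_t\phi(\rho_t),\Delta\psi_t}_\pi$ and $-\ip{\Delta\phi(\rho_t),\partial_t\psi_t}_\pi$. The first term, via the chain rule $\partial_t\phi(\rho_t)=\phi'(\rho_t)\partial_t\rho_t$ and then the continuity equation, gives exactly the second (linear-in-$\hrho$) contribution to $\cB(\rho,\psi)$. The second term is where the Hamilton--Jacobi-type equation $\partial_t\psi_t+\tfrac12\sum_y(\psi_t(x)-\psi_t(y))^2\partial_1\theta(\rho_t(x),\rho_t(y))Q(x,y)=0$ enters; expanding $-\ip{\Delta\phi(\rho_t),\partial_t\psi_t}_\pi$ using this equation yields a triple sum indexed by $x,y,z$ involving $\partial_1\theta$ alone. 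To recognize the symmetric expression involving both $\partial_1\theta$ and $\partial_2\theta$ that appears in the first block of $\cB(\rho,\psi)$, I would symmetrize under the exchange $x\leftrightarrow y$, using the symmetry $Q(x,y)\pi(x)=Q(y,x)\pi(y)$ from detailed balance together with $\theta(r,s)=\theta(s,t)$ to convert a $\partial_1\theta$ piece into a $\partial_2\theta$ piece. Setting $t=0$ at the end gives exactly $\cB(\rho,\psi)$.

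The main obstacle is the bookkeeping in the $-\ip{\Delta\phi(\rho_t),\partial_t\psi_t}_\pi$ term: substituting the Hamilton--Jacobi equation produces an asymmetric triple sum, and one has to perform a careful symmetrization (invoking detailed balance and the symmetry of $\theta$) to reassemble it into the explicit symmetric form appearing in the definition of $\cB$. Everything else is routine: the first derivative is a standard continuity-equation computation, and the chain-rule piece yielding the $\phi'(\rho)$-term falls out immediately once one substitutes $\partial_t\rho_t$ back in and integrates by parts.
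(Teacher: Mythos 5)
Your proposal is correct and follows essentially the same route as the paper: differentiate $\cF$ along a solution of the geodesic system, use $\hrho\,\nabla f'(\rho)=\nabla\phi(\rho)$ to get $\ddt\cF(\rho_t)=\ip{\nabla\phi(\rho_t),\nabla\psi_t}_\pi$, then split the second derivative by the product rule, handling one term via the chain rule and the continuity equation and the other via the Hamilton--Jacobi equation with a symmetrization in $x\leftrightarrow y$ based on detailed balance and the symmetry of $\theta$. This matches the paper's proof step for step, including the identification of the symmetrization as the only delicate piece of bookkeeping.
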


Since the Riemannian metric does not necessarily extend continuously to the
boundary of $\PX$, it is not obvious that a lower bound $\kappa$ on
the Hessian in the interior $\PXs$ implies geodesic $\kappa$-convexity
in the metric space $(\PX,\cW)$. Nevertheless, an Eulerian argument by
Daneri and Savar\'e \cite{DS08} can be adapted to the current setting,
to show that this is indeed the case. We refer to \cite[Theorem
4.5]{EM11} and \cite[Proposition 2.1]{Mie11b} for the details in the case where
$\theta$ is the logarithmic mean.

\begin{proposition}\label{prop:Daneri-Savare}
  For $\kappa \in \R$ the following assertions are equivalent:
\begin{enumerate}
\item For every constant speed geodesic $(\rho_t)_{t \in [0,1]}$
  in $(\PX, \cW)$ we have
\begin{align}\label{eq:K-convex}
  \cF(\rho_t) \leq
  (1-t) \cF(\rho_0) + t \cF(\rho_1) - \frac\kappa{2} t(1-t) \cW(\rho_0, \rho_1)^2\;.
\end{align}
\item For all $\bar\rho_0 \in \PX$, the solution $(\rho_t)$ to the
  porous medium equation with $\rho_0 = \bar\rho_0$ from Proposition
  \ref{prop:classical}, satisfies the evolution variational inequality
\begin{align}\label{eq:EVI}
  \frac{1}{2}\ddt \cW(\rho_t, \sigma)^2 + \frac{\kappa}{2}
\cW(\rho_t, \sigma)^2 \leq \cF(\sigma) - \cF(\rho_t)
\end{align}
for all $\sigma \in \PX$ and a.e. $t \geq 0$.
\item For all $\rho \in \PXs$ we have 
\begin{align*}
\Hess \cF(\rho) \geq \kappa\;.
\end{align*}
\item For all $\rho \in \PXs$ and $\psi \in \R^\cX$ we have
\begin{align*}
 \cB(\rho, \psi) \geq \kappa \cA(\rho, \psi)\;.
\end{align*}
\end{enumerate}
\end{proposition}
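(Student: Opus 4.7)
The plan attacks the four-way equivalence by the cycle $(1) \Rightarrow (3)$, $(3) \Leftrightarrow (4)$, $(3) \Rightarrow (2)$, $(2) \Rightarrow (1)$. The equivalence of $(3)$ and $(4)$ is essentially formal bookkeeping: under the identification of Proposition~\ref{prop:riem-properties}, a tangent vector at $\rho \in \PXs$ is a discrete gradient $\nabla\psi$ with squared norm $\cA(\rho,\psi)$, and by Proposition~\ref{prop:Hessian} the value of the Hessian quadratic form on this vector equals $\cB(\rho,\psi)$. Hence the operator inequality $\Hess\cF(\rho) \geq \kappa\, \mathrm{Id}$ on $T_\rho$ and the numerical inequality $\cB(\rho,\psi) \geq \kappa\cA(\rho,\psi)$ for all $\psi$ are literally the same condition.

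For $(1) \Rightarrow (3)$ I would fix $\rho \in \PXs$ and $\psi \in \R^\cX$ and invoke Proposition~\ref{prop:geod} to produce a smooth constant-speed $\cW$-geodesic $(\rho_t)_{t \in (-\eps,\eps)}$ with $\rho_0 = \rho$ and initial velocity $\nabla\psi$, which remains in $\PXs$ for short times. Using $\cW(\rho_0,\rho_t)^2 = t^2 \|\nabla\psi\|_\rho^2$, differentiating the $\kappa$-convexity inequality twice at $t=0$ yields $\ddtt\big|_{t=0}\cF(\rho_t) \geq \kappa \|\nabla\psi\|_\rho^2$, which by Proposition~\ref{prop:Hessian} is precisely $(3)$.

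The main work is the implication $(3) \Rightarrow (2)$, which follows the Eulerian strategy of Daneri and Savar\'e \cite{DS08}, adapted to the present non-local discrete setting in \cite[Theorem 4.5]{EM11} and \cite[Proposition 2.1]{Mie11b}. Given $\bar\rho_0,\sigma \in \PX$, let $(\rho_t)_{t\geq 0}$ be the $\cW$-gradient flow of $\cF$ from $\bar\rho_0$ (which exists and coincides with the Hilbertian flow by Theorem~\ref{thm:gradient-flow} together with Proposition~\ref{prop:classical}), and for each $t$ pick a constant-speed geodesic $(\mu_s^t,\psi_s^t)_{s\in[0,1]} \in \CE(\rho_t,\sigma)$. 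Differentiating the action $\int_0^1 \cA(\mu_s^t,\psi_s^t)\dd s$ in $t$ along the flow, and reorganising via the continuity and Hamilton--Jacobi equations from Proposition~\ref{prop:geod}, one reaches a pointwise estimate into which the Hessian bound in the form $(4)$ enters exactly once; integration in $s$ then delivers \eqref{eq:EVI}. The principal obstacle is that $\cF$ and the Riemannian structure may degenerate on $\partial \PXs$, so these smooth Eulerian identities cannot be applied directly when $\bar\rho_0$ or $\sigma$ charges the boundary. I would handle this by first establishing \eqref{eq:EVI} for interior data and then extending by density, using the lower semicontinuity of $\cW$ and $\cF$ together with the continuity of the gradient-flow semigroup from Proposition~\ref{prop:classical}.

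Finally, $(2) \Rightarrow (1)$ is standard abstract metric gradient-flow theory (see e.g. \cite[Theorem 4.3.2]{AGS08} or \cite[Prop.~3.2]{DS08}): an $\mathrm{EVI}_\kappa$ for a generating functional implies $\kappa$-convexity of that functional along every constant-speed geodesic, whose existence is guaranteed by Proposition~\ref{prop:W-metric}. This closes the cycle.
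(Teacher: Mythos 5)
Your proposal is correct and follows essentially the same route as the paper, which itself does not spell out a proof but points to the adaptation of the Daneri--Savar\'e Eulerian argument carried out in \cite[Theorem 4.5]{EM11} and \cite[Proposition 2.1]{Mie11b}: the identification $\cB(\rho,\psi)=\ip{\Hess\cF(\rho)\nabla\psi,\nabla\psi}_\rho$ from Proposition \ref{prop:Hessian} gives (3)$\Leftrightarrow$(4), differentiating \eqref{eq:K-convex} along the geodesics of Proposition \ref{prop:geod} gives (1)$\Rightarrow$(3), the Eulerian argument (with the boundary degeneracy handled exactly as you indicate) gives (3)$\Rightarrow$(2), and the standard EVI-to-convexity implication closes the cycle. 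Your outline is the intended argument, with the same decomposition and the same key difficulty correctly identified.
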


The evolution variational inequality \eqref{eq:EVI} can be regarded as
a definition of a (strong) notion of a gradient flow in the setting of
a metric space.  This inequality has been extensively studied
recently, see for example \cite{AGS08} and \cite[Section 3]{DS08}.

\subsection{Examples}
In this section we will study $\cW$-geodesic convexity of $\cF$ in
some simple concrete examples.  To simplify notation let us write, for
$i = 1, 2$,
\begin{align*}
 \partial_i \hrho(x,y) :=  \partial_i \theta(\rho(x),\rho(x))\;.
\end{align*}
Set
\begin{align}\label{eq:kappa-q}
\kappa_Q := \sup \{ \kappa \in \R \ : \ \cF \text{ is $\kappa$-convex along $\cW_{\phi,f}$-geodesics}  \}\;,
\end{align}
with the understanding that $\sup\{\emptyset\} = - \infty$.

\subsubsection{The two-point space}
We consider the two-point space $\cX = \{a, b\}$ endowed with the
$Q$-matrix $Q$ defined by
\begin{align*}
Q(a,b) = p\;,
\qquad  Q(b,a) = q\;,
\end{align*}
for $p, q > 0$. In this case the stationary probability measure $\pi$
is given by
\begin{align*}
 \pi(a)  = \frac{q}{p+q}\;, \qquad \pi(b)  = \frac{p}{p+q}\;.
\end{align*}
In this case we have the following characterisation of $\kappa_Q$ in
terms of $p$ and $q$.

\begin{proposition}[Geodesic convexity on the two-point
  space]\label{prop:2pt-convexity}
  Let $Q$ be as above. Then
\begin{align*}
 \kappa_Q =  \frac12 \inf \bigg\{
 p \phi'(r) + q \phi'(s) + \theta(r,s) \big(p f''(r) + q f''(s)\big)
 \bigg\}\;,
\end{align*}
where $r = \frac{p+q}{2q}(1-\alpha)$, $s = \frac{p+q}{2p}(1+\alpha)$,
and the infimum runs over all $\alpha \in (-1,1)$.
\end{proposition}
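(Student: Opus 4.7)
The plan is to apply the characterisation of $\kappa_Q$ given by Proposition \ref{prop:Daneri-Savare}(iv), namely
\begin{align*}
 \kappa_Q = \inf_{\rho \in \PXs,\, \psi \in \R^\cX} \frac{\cB(\rho,\psi)}{\cA(\rho,\psi)}\;,
\end{align*}
and to evaluate this infimum explicitly on the two-point space. Since $\cX = \{a,b\}$, the function $\psi$ is determined up to an additive constant by a single parameter; by scale/translation invariance of the ratio I may set $\psi(a) = 0$, $\psi(b) = 1$. Similarly, every $\rho \in \PXs$ is described by the pair $(r,s) = (\rho(a),\rho(b))$ with the normalisation $rq + sp = p+q$, which I parameterise as $r = \frac{p+q}{2q}(1-\alpha)$, $s = \frac{p+q}{2p}(1+\alpha)$ with $\alpha \in (-1,1)$.

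Next I compute the two sides. For $\cA$, using detailed balance $p\pi(a)=q\pi(b)=pq/(p+q)$, a direct calculation gives
\begin{align*}
 \cA(\rho,\psi) = \theta(r,s)\,\frac{pq}{p+q}\;.
\end{align*}
For $\cB$ I plug into \eqref{eq:B} and split the triple sum into the two nontrivial cases $(x,y) \in \{(a,b),(b,a)\}$. The symmetry $\theta(s,r)=\theta(r,s)$ (so $\partial_1\theta(s,r) = \partial_2\theta(r,s)$) makes both cases contribute equally, and the two summands combine to
\begin{align*}
 T_1 &:= \tfrac12 \bip{\hDelta_\phi\rho\cdot\nabla\psi,\nabla\psi}_\pi
      = \tfrac{pq}{2(p+q)}\bigl(\phi(s)-\phi(r)\bigr)\bigl[p\,\partial_1\theta(r,s) - q\,\partial_2\theta(r,s)\bigr]\;,\\
 T_2 &:= \bip{\hrho\cdot\nabla\psi,\nabla(\phi'(\rho)\Delta\psi)}_\pi
      = -\tfrac{pq}{p+q}\,\theta(r,s)\bigl[p\phi'(r)+q\phi'(s)\bigr]\;.
\end{align*}

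The decisive step is to collapse $T_1 - T_2$ into the form claimed. For this I differentiate the defining identity $\theta(r,s)(f'(r)-f'(s)) = \phi(r) - \phi(s)$ in each variable, obtaining
\begin{align*}
 \partial_1\theta(r,s) = \frac{\phi'(r) - \theta(r,s) f''(r)}{f'(r) - f'(s)}\;,\qquad
 \partial_2\theta(r,s) = \frac{\theta(r,s) f''(s) - \phi'(s)}{f'(r) - f'(s)}\;.
\end{align*}
Inserting these into the expression for $T_1$ and using $\phi(s)-\phi(r) = -\theta(r,s)(f'(r)-f'(s))$ causes the denominator $f'(r)-f'(s)$ to cancel, leaving
\begin{align*}
T_1 = -\tfrac{pq\,\theta(r,s)}{2(p+q)}\bigl[p\phi'(r)+q\phi'(s)\bigr] + \tfrac{pq\,\theta(r,s)^2}{2(p+q)}\bigl[pf''(r) + q f''(s)\bigr]\;.
\end{align*}
Adding $-T_2$ doubles the first term and yields
\begin{align*}
 \cB(\rho,\psi) = \frac{pq\,\theta(r,s)}{2(p+q)}\Bigl(p\phi'(r) + q\phi'(s) + \theta(r,s)\bigl[pf''(r) + qf''(s)\bigr]\Bigr)\;,
\end{align*}
so that the ratio $\cB/\cA$ is precisely $\frac12$ times the bracketed expression in the statement. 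Taking the infimum over $(r,s)$, equivalently over $\alpha \in (-1,1)$, gives the desired identity for $\kappa_Q$.

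I expect the main obstacle to be the cancellation in the computation of $T_1 - T_2$: one must be careful to track the symmetry $\partial_1\theta(s,r)=\partial_2\theta(r,s)$ correctly when summing over $(x,y) = (a,b)$ and $(x,y) = (b,a)$, and the cancellation of the $(f'(r)-f'(s))$-denominator via the differentiated form of the defining relation for $\theta_{\phi,f}$ is the algebraic heart of the proof. Everything else is routine bookkeeping on a two-point space.
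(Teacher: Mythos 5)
Your proposal is correct and takes essentially the same route as the paper: compute $\cA$ and $\cB$ explicitly on the two-point space and conclude via Proposition \ref{prop:Daneri-Savare}, with the only difference that you spell out the algebraic simplification (via the differentiated identity $\partial_1\theta(f'(r)-f'(s))+\theta f''(r)=\phi'(r)$ and its analogue in $s$) which the paper leaves to the reader. One cosmetic slip: adding $-T_2$ does not ``double'' the first term of $T_1$ but flips its sign (from $-\tfrac12$ to $+\tfrac12$ of that term), though the final formula for $\cB$ you state is the correct one.
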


\begin{proof}
An explicit computation shows that
\begin{align*}
 \cA(\rho, \psi)
    = \frac{pq}{p+q} 
     \frac{\phi(\rho(a)) - \phi(\rho(b))}{f'(\rho(a)) - f'(\rho(b))}
     (\psi(a) - \psi(b))^2\;,
\end{align*}
and
\begin{align*}
\cB(\rho, \psi)
&  =\frac{pq}{p+q} \bigg[  \frac12  
    \Big( q \partial_2 \hrho(a,b) - p \partial_1 \hrho(a,b) \Big)
    \Big( \phi(\rho(a)) - \phi(\rho(b)) \Big)
\\&  \qquad\qquad+
   \hrho(a,b)
    \Big( p \phi'(\rho(a)) + q \phi'(\rho(b)) \Big)\bigg]
    ( \psi(a) - \psi(b))^2\;.
\end{align*}
In view of Proposition \ref{prop:Daneri-Savare}, the result follows
from these expressions.
\end{proof}

The following result provides a simplified expression for $\kappa_Q$
in the case where $\pi$ is symmetric and $\theta = \theta_m$.  The
case $m=1$ has already been considered in \cite[Proposition
2.12]{Ma11}.

\begin{corollary}
If $p = q$ and $\phi = \phi_m$, and $f = f_m$, then
\begin{align*}
  \kappa_Q &= 
   \left\{ \begin{array}{ll}
\displaystyle{\frac{pm}2} \inf_{\alpha \in (-1,1)} \bigg\{
\displaystyle{\frac{m-1}{m}
\frac{   (1-\alpha)^{m} - (1+\alpha)^{m}}
	 {(1-\alpha)^{m-1} - (1+\alpha)^{m-1}}}
  \Big( (1-\alpha)^{m-2} + (1+\alpha)^{m-2}\Big)
\\\qquad\qquad\qquad
 + (1-\alpha)^{m-1} + (1+\alpha)^{m-1}  \bigg\}\;,
   & \hspace{-13ex}0 < m \leq 2\;, \  m \neq 1\;,\\
p \inf_{\alpha \in (-1,1)} \bigg\{
\displaystyle{\frac{1}{1-\alpha^2}\frac{\alpha}{\arctanh(\alpha)}
+1 } \bigg\}
 = 2p\;, 
   & m = 1\;.\end{array} \right.\\
\end{align*}
\end{corollary}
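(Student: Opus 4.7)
The proof is a direct computation from Proposition~\ref{prop:2pt-convexity}. With $p = q$, the values $r = 1-\alpha$ and $s = 1+\alpha$ are independent of $p$, and the prefactor $\tfrac12$ combines with $p = q$ to yield $p/2$ in front of the infimum. The problem thus reduces to simplifying the quantity
\begin{align*}
G_m(\alpha) := \phi_m'(1-\alpha) + \phi_m'(1+\alpha) + \theta_m(1-\alpha, 1+\alpha)\bigl(f_m''(1-\alpha) + f_m''(1+\alpha)\bigr)
\end{align*}
and then computing $\inf_{\alpha \in (-1,1)} G_m(\alpha)$.

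For $0 < m \leq 2$ with $m \neq 1$, I would substitute $\phi_m'(r) = m r^{m-1}$ and $f_m''(r) = m r^{m-2}$. Both sums pick up a common factor $m$, and inserting \eqref{eq:thetam} produces the stated formula with prefactor $pm/2$. This step is mechanical. For $m = 1$ the appropriate choice is $\phi_1(r) = r$ and $f_1(r) = r \log r$, so that $\phi_1'(r) + \phi_1'(s) = 2$ and $f_1''(r) + f_1''(s) = \tfrac{1}{1-\alpha} + \tfrac{1}{1+\alpha} = \tfrac{2}{1-\alpha^2}$. Using $\arctanh(\alpha) = \tfrac12 \log\tfrac{1+\alpha}{1-\alpha}$, the logarithmic mean becomes $\theta_1(1-\alpha, 1+\alpha) = \alpha/\arctanh(\alpha)$. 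Pulling an overall factor $2$ out of the infimum yields the displayed expression in the second case.

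The only non-mechanical point is the evaluation of the infimum for $m = 1$, i.e.\ showing that it equals $2$ so that $\kappa_Q = 2p$. For this I would make the substitution $t = \arctanh(\alpha)$, so that $\alpha = \tanh t$ and $1 - \alpha^2 = 1/\cosh^2 t$. A short calculation gives
\begin{align*}
\frac{1}{1-\alpha^2} \cdot \frac{\alpha}{\arctanh(\alpha)} = \frac{\cosh^2(t)\tanh(t)}{t} = \frac{\sinh(2t)}{2t}.
\end{align*}
The elementary inequality $\sinh u \geq u$ for $u \geq 0$, combined with the evenness of $\sinh(2t)/(2t)$ in $t$, shows that this expression is $\geq 1$ for all $\alpha \in (-1,1)$, with equality attained only in the limit $t \to 0$, i.e.\ $\alpha \to 0$. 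Hence the infimum equals $2$ and $\kappa_Q = 2p$. I expect this final elementary inequality to be the main (and only) point of the proof beyond routine substitution.
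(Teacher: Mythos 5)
Your proposal is correct and follows the same route as the paper, which simply substitutes $r=1-\alpha$, $s=1+\alpha$, $\phi_m'$, $f_m''$ and $\theta_m$ into Proposition \ref{prop:2pt-convexity} (with the standard interpretation $\phi_1(r)=r$, $f_1(r)=r\log r$ for $m=1$). The only part the paper leaves implicit is the evaluation of the $m=1$ infimum as $2$, and your reduction to $\sinh(2t)/(2t)\geq 1$ via $t=\arctanh\alpha$ is a correct and clean way to verify it.
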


\begin{proof}
This follows readily from Proposition \ref{prop:2pt-convexity}.
\end{proof}

\subsubsection{The discrete circle}

As announced in Proposition \ref{prop:intro-counterexample}, we will
exhibit an instance of the discrete porous medium equation where
convexity fails.  For this purpose, we consider the discrete circle of
length $N$, i.e., $\cX = \cT_N = \Z / N \Z$ for some $N \geq 2$. All
computations below are understood modulo $N$. Let $Q$ denote the
discrete Laplacian, normalised so that $Q(x,y) = q$ if $|x - y| = 1$
and $Q(x,y) = 0$ otherwise. In this case $\pi$ is the uniform
probability given by $\pi(x) = \frac1N$ for all $x \in \cX$.

\begin{proposition}\label{prop:counterexample}
  Let $m = 2$, let $f_m$, $\phi_m$ and $\theta_m$ be as in Section
  \ref{sec:Potenzmittel}, and let $\cW$ be the associated non-local
  transportation metric.  Let $Q$ be the discrete Laplacian on the
  discrete circle $\cT_N=\Z/N\Z$ with $N\geq6$ defined above.  Then
  the functional $\cF_m$ is not convex along $\cW$-geodesics. More
  precisely,
\begin{align*}
 \kappa_Q \leq -\frac{qN}{2}\;.
\end{align*}
\end{proposition}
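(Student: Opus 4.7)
The plan is to exhibit a one-parameter family of test pairs $(\rho_\eps, \psi) \in \PXs \times \R^{\cT_N}$ for which $\cB(\rho_\eps, \psi)/\cA(\rho_\eps, \psi) \to -qN/2$ as $\eps \downarrow 0$; by Proposition \ref{prop:Daneri-Savare}(4) this will force $\kappa_Q \leq -qN/2$. Concretely, I would take $\rho_\eps$ concentrating at $0$, namely $\rho_\eps(0) := N - \eps$ and $\rho_\eps(x) := \eps/(N-1)$ for $x \neq 0$, together with the symmetric three-level function $\psi$ given by $\psi(1) = \psi(2) = 1$, $\psi(N-2) = \psi(N-1) = -1$, and $\psi \equiv 0$ on the middle block $\{0, 3, \ldots, N-3\}$. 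The hypothesis $N \geq 6$ enters precisely so that this middle block is nonempty and keeps the positive and negative humps of $\psi$ separated on the circle.

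The decisive feature of this $\psi$ is that $\Delta\psi(0) = q(\psi(1) - 0) + q(\psi(N-1) - 0) = q - q = 0$. Combined with $\phi'(\rho_\eps(x)) = 2\rho_\eps(x) = O(\eps)$ for $x \neq 0$, this forces $\phi'(\rho_\eps)\Delta\psi$, and hence $\nabla(\phi'(\rho_\eps)\Delta\psi)$, to vanish uniformly as $\eps \downarrow 0$, so the second term $-\ip{\hrho\nabla\psi, \nabla(\phi'(\rho)\Delta\psi)}_\pi$ of $\cB$ disappears in the limit. This cancellation is the whole point of the symmetric construction: a one-sided bump such as $\psi$ supported at $\{0\}$ leaves $\phi'(\rho_\eps(0))\Delta\psi(0) \sim -4qN$, producing a second-term contribution of order $+q^2 N$ that swamps the negative first term. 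The main obstacle in the proof is exactly locating this cancellation structure.

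For the first term of $\cB$, one computes in the limit $\Delta\phi(\rho_\eps)(0) \to -2qN^2$, $\Delta\phi(\rho_\eps)(\pm 1) \to qN^2$, and $\Delta\phi(\rho_\eps)(x) \to 0$ for $x \notin \{0, \pm 1\}$. Since $\partial_1\theta_2 = \partial_2\theta_2 = 1/2$, this gives $\hDelta_\phi\rho_\eps(0, \pm 1) \to -qN^2/2$, while on the remaining edges supporting $\nabla\psi$, namely $(2,3)$ and $(N-3, N-2)$, both endpoints lie in the set where $\Delta\phi(\rho_\eps) \to 0$ and the factor vanishes. Summing over the four contributing ordered edge pairs with weight $Q\pi = q/N$ and the overall $1/4$ prefactor yields $\tfrac12 \ip{\hDelta_\phi\rho_\eps\cdot\nabla\psi,\nabla\psi}_\pi \to -q^2 N/2$. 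An analogous count gives $\cA(\rho_\eps,\psi) \to q$, since $\hrho(0, \pm 1) \to N/2$ while $\hrho$ vanishes on $(2,3)$ and $(N-3, N-2)$ in the limit. Therefore $\cB/\cA \to -qN/2$, and Proposition \ref{prop:Daneri-Savare}(4) immediately yields the stated bound $\kappa_Q \leq -qN/2$.
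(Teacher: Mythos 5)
Your proof is correct and follows essentially the same route as the paper: you concentrate $\rho$ at a single site (total mass $N$ there, $O(\eps)$ elsewhere) and choose $\psi$ so that $\Delta\psi$ vanishes at the concentration point, which kills the second term of $\cB$ in \eqref{eq:B} and yields $\cB/\cA \to -q^2N/2 \,/\, q = -qN/2$, whence $\kappa_Q \leq -qN/2$ via Proposition \ref{prop:Daneri-Savare}(4). The paper's test pair $\psi=(0,1,2,\dots,2,0)$, $\rho=(\eps,N-(N-1)\eps,\eps,\dots,\eps)$ exploits exactly the same cancellation, so your symmetric two-hump $\psi$ is just a cosmetic variant of the same argument.
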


\begin{proof}
Since $m =2$ we have the simple expressions
\begin{align}\label{eq:m-2}
\phi(r) = f(r) = r^{2}\;,\quad
\theta(r,s) = \frac{r+s}{2}\;,\quad
\partial_1 \theta(r,s) =\partial_2\theta(r,s) = \frac12\;.
\end{align}
Inserting these into \eqref{eq:B}, we calculate the Hessian of the
R\'enyi entropy as

\begin{align*}
  \cB(\rho,\psi) & = \frac{q^2}{4N} \sum_{i=1}^N (\psi_i - \psi_{i+1})^2
  \bigg(\rho_{i-1}^2+\rho_{i+2}^2+3\rho_{i}^2+3\rho_{i+1}^2+8\rho_{i}\rho_{i+1} \bigg) 
\\& \quad - \frac{q^2}{N} \sum_{i=1}^N (\psi_i -
  \psi_{i+1})(\psi_{i+1} - \psi_{i+2}) \bigg( 2\rho_{i+1}^2 + \rho_{i+1}\big(\rho_{i}+\rho_{i+2}\big)\bigg)\;.
\end{align*}
Choosing  in particular 
\begin{align*}
  \psi=(\psi_1,\dots,\psi_N)=(0,1,2,\dots,2,0)\;,\quad\rho=(\rho_1,\dots,\rho_N)=(\eps,N-(N-1)\eps,\eps,\dots,\eps)
\end{align*}
we see that only three terms in the first and one term in the second
sum are non-zero, and we find 
\begin{align*}
  \cB(\rho,\psi) & = -\frac{q^2N}{2} + O(\eps) \;.
\end{align*}
Since $\cA(\rho,\psi) = q + O(\eps)$, the result follows.
\end{proof}

Let us remark that in the case where $q = N^2$, the discrete Laplacian
converges to the continuous Laplacian on the torus $\mathbb{T} = \R /
\Z$. In this limit space, it is well known that the R\'enyi entropy
$\cF_2$ is convex along $2$-Wasserstein geodesics in
$\cP(\mathbb{T})$. However, the previous result shows that the
behaviour at the discrete level is very different, since $\kappa_Q
\leq - N^3/2$.

\subsection{Consequences of geodesic convexity}

In this subsection we will show that convexity of the entropy
functional along $\cW$-geodesics implies a contraction property for
solutions to the porous medium equation as well as a number of
functional inequalities for the invariant measure of the Markov
chain. These results are in the spirit of the work of Otto--Villani
\cite{OV00}. Similar results for the heat flow associated to a finite
Markov chain have been obtained in \cite{EM11}.

As a first result we single out the following $\kappa$-contractivity
property, which is a direct consequence of the evolution variational
inequality \eqref{eq:EVI}.

\begin{proposition}[$\kappa$-Contractivity of the PME]\label{prop:contractivity}
  Suppose that $\kappa_Q \in \R$ and let $(\rho_t)_{t},
  (\sigma_t)_{t}$ be two solutions of the discrete porous medium
  equation as given by Proposition \ref{prop:classical}.  Then we have
  for all $t\geq0$:
\begin{align*}
 \cW(\rho_t, \sigma_t) \leq e^{-\kappa t} \cW(\rho_0, \sigma_0)\ .
\end{align*}
\end{proposition}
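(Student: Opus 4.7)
The plan is to derive the contraction directly from the evolution variational inequality \eqref{eq:EVI}, via the standard argument by which an EVI satisfied by two gradient-flow curves yields a contraction estimate. Since $\kappa_Q \in \R$ by hypothesis, the entropy $\cF$ is $\kappa$-geodesically convex for $\kappa = \kappa_Q$, and so by the equivalence in Proposition \ref{prop:Daneri-Savare} both PME-solutions $(\rho_t)$ and $(\sigma_t)$ from Proposition \ref{prop:classical} satisfy \eqref{eq:EVI} for every test density in $\PX$.

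Fix $s \geq 0$. I would apply \eqref{eq:EVI} to the curve $(\rho_t)$ with the frozen test density $\sigma = \sigma_s$ and, symmetrically, to $(\sigma_t)$ with the frozen test density $\rho_s$, and then evaluate both at $t = s$. Summing the two resulting inequalities, the entropy terms $\pm(\cF(\sigma_s) - \cF(\rho_s))$ cancel, leaving
\[
\frac{1}{2}\ddt\Big|_{t=s}\bigl[\cW(\rho_t,\sigma_s)^2 + \cW(\sigma_t,\rho_s)^2\bigr] \leq -\kappa\,\cW(\rho_s,\sigma_s)^2.
\]

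The key remaining step is to identify the left-hand side with $\frac{1}{2}\ddt\big|_{t=s}\cW(\rho_t,\sigma_t)^2$, which is formally the chain rule for $g(t,u) := \cW(\rho_t,\sigma_u)^2$ evaluated on the diagonal. This is the main technical point: it requires $t \mapsto \cW(\rho_t,\sigma_t)^2$ to be absolutely continuous with derivative almost everywhere equal to the sum of the two partial derivatives above. Both properties follow from the fact that EVI-solutions are locally absolutely continuous curves in $(\PX,\cW)$, combined with the triangle inequality $|\cW(\rho_t,\sigma_t) - \cW(\rho_s,\sigma_s)| \leq \cW(\rho_t,\rho_s) + \cW(\sigma_t,\sigma_s)$; this chain-rule step is standard in the theory of metric gradient flows, see for instance \cite[Ch.~4]{AGS08}. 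Once the identification is in place, setting $D(t) := \cW(\rho_t,\sigma_t)^2$ produces the scalar differential inequality $D'(t) \leq -2\kappa D(t)$ for a.e.\ $t \geq 0$, and Grönwall's lemma gives $D(t) \leq e^{-2\kappa t} D(0)$, which upon taking square roots is exactly the claimed contraction estimate.
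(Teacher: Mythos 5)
Your argument is correct and takes essentially the paper's route: both deduce from Proposition \ref{prop:Daneri-Savare} that, since $\kappa_Q\in\R$, the PME solutions of Proposition \ref{prop:classical} satisfy the evolution variational inequality \eqref{eq:EVI} with $\kappa=\kappa_Q$, and then use the principle that EVI($\kappa$)-flows are $e^{-\kappa t}$-contractive. The only difference is that the paper delegates this last implication to \cite[Proposition 3.1]{DS08}, whereas you re-derive it via the standard doubling-of-variables and Gr\"onwall argument, correctly identifying the chain rule for $t\mapsto\cW(\rho_t,\sigma_t)^2$ along the diagonal as the one technical point, which is indeed handled rigorously in \cite{AGS08} and \cite{DS08}.
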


\begin{proof}
  This follows from Proposition \ref{prop:Daneri-Savare} by applying
  \cite[Proposition 3.1]{DS08} to the functional $\cF$ on the metric
  space $(\PX,\cW)$.
\end{proof}

We introduce the following functional, which we regard as an analogue
of the Fisher information:
\begin{align*}
  \cI(\rho) := \frac12\sum_{x,y\in \cX}\big[f'(\rho(y))-f'(\rho(x))\big]\big[\phi(\rho(y))-\phi(\rho(x))\big]Q(x,y)\pi(x)\ .
\end{align*}
If $f$ is not differentiable at $0$, we use the convention that
$\cI(\rho)=+\infty$ for $\rho\notin\cP_*(\cX)$. Using Proposition
\ref{prop:grad-F}, we see that for $\rho\in\cP_*(\cX)$ we have
\begin{align*}
  \cI(\rho) = \| \grad_\cW\cF(\rho) \|_\rho\ .
\end{align*}
The significance of this functional is due to the fact that it gives
the change, or dissipation, of the entropy functional along solutions
of the equation $\partial_t\rho_t=\Delta\phi(\rho_t)$, namely:
\begin{align*}
  \ddt\cF(\rho_t) = -\cI(\rho_t)\ .
\end{align*}
Note that in the setting of Section \ref{sec:Potenzmittel}, the
dissipation functional $\cI_m$ associated to $\phi_m$ and $f_m$ is
given by
\begin{align*}
  \cI_m(\rho) := \frac12\frac{m}{m-1}\sum\limits_{x, y\in \cX}\big[\rho(y)^{m-1}-\rho(x)^{m-1}\big]\big[\rho(y)^m-\rho(x)^m\big]Q(x,y)\pi(x)\ .
\end{align*}

As before, let $\one \in \PX$ denotes the density of the stationary
distribution, which is everywhere equal to $1$. It follows from the
definition that $\cF(\one) = f(1)$.

We introduce the following functional inequalities.

\begin{definition}\label{def:functional-inequalities}
  The Markov chain $(\cX, Q, \pi)$ satisfies 
\begin{enumerate}
\item an $\cF\cW\cI$ inequality with
  constant $\kappa\in\R$ if for all
  $\rho\in\PX$,
  \begin{align}\label{eq:FWI}
  \tag{{$\cF\cW\cI$}($\kappa$)}
    \cF(\rho) - \cF(\one) \leq \cW(\rho,\one)\sqrt{\cI(\rho)}-\frac{\kappa}{2}\cW(\rho,\one)^2\ .
  \end{align}
\item an \emph{entropy-dissipation} inequality with constant
  $\lambda>0$ if for all $\rho\in\PX$,
  \begin{align}
  \tag{EDI($\lambda$)}
  \label{eq:EDI}
    \cF(\rho) - \cF(\one) \leq \frac{1}{2\lambda}\cI(\rho)\ .
  \end{align}
\end{enumerate}  
\end{definition}

The following result relates these inequalities to $\cW$-geodesic
convexity of the entropy functional $\cF$. Recall the definition of
$\kappa_Q$ from \eqref{eq:kappa-q}.

\begin{theorem}\label{thm:functional-inequalities} 

  The following assertions hold.
  \begin{enumerate}
  \item If $\kappa_Q\in\R$, then $Q$
    satisfies $\cF\cW\cI(\kappa_Q)$.
  \item If $\kappa_Q>0$, then $Q$
    satisfies EDI$(\kappa_Q)$.
  \end{enumerate}
\end{theorem}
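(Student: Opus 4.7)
The plan follows the standard Otto--Villani strategy: I will deduce the HWI-type inequality \eqref{eq:FWI} from geodesic $\kappa_Q$-convexity of $\cF$ by differentiating $\cF$ along a geodesic ending at $\one$ and applying Cauchy--Schwarz, and then obtain \eqref{eq:EDI} from it by Young's inequality. For (1), fix first $\rho \in \PXs$ with $\rho \neq \one$ and let $(\rho_t)_{t \in [0,1]}$ be a constant-speed $\cW$-geodesic with $\rho_0 = \rho$, $\rho_1 = \one$, which exists by Proposition \ref{prop:W-metric} and is governed on some initial interval by the geodesic equations \eqref{eq:geod-equs} with associated potential $(\psi_t)$, by Proposition \ref{prop:geod}. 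The equivalence in Proposition \ref{prop:Daneri-Savare} (applied with $\kappa = \kappa_Q$) gives
\begin{align*}
  \cF(\rho_t) \leq (1-t)\cF(\rho) + t \cF(\one) - \frac{\kappa_Q}{2} t(1-t)\cW(\rho,\one)^2,
\end{align*}
and subtracting $\cF(\rho)$, dividing by $t > 0$, and sending $t \to 0^+$ bounds the right derivative of $\cF$ along $(\rho_t)$ by $\cF(\one) - \cF(\rho) - \tfrac{\kappa_Q}{2}\cW(\rho,\one)^2$.

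The initial slope itself is evaluated exactly as in the proof of Proposition \ref{prop:grad-F}: along any smooth curve in $\PXs$ satisfying the continuity equation, $\ddt\cF(\rho_t) = \ip{\nabla f'(\rho_t), \nabla\psi_t}_{\rho_t}$. Cauchy--Schwarz at $t = 0$ and the constant-speed relation $\|\nabla \psi_0\|_\rho = \cW(\rho,\one)$ yield
\begin{align*}
  -\ddt\Big|_{t=0}\cF(\rho_t) \leq \|\nabla f'(\rho)\|_\rho \, \cW(\rho,\one).
\end{align*}
The defining identity $\theta(r,s)(f'(r) - f'(s)) = \phi(r) - \phi(s)$ from Assumption \ref{ass:f} collapses the weighted norm exactly onto the Fisher-type functional, $\|\nabla f'(\rho)\|_\rho^2 = \cI(\rho)$, so combining the two estimates proves \eqref{eq:FWI} at every $\rho \in \PXs$. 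To extend to a boundary point $\rho \in \PX \setminus \PXs$ with $\cI(\rho) < \infty$, I would apply what is already proved to $\rho^\eps := (1-\eps)\rho + \eps \one \in \PXs$ and pass to the limit $\eps \to 0$, using continuity of $\cF$ and of $\cW(\cdot, \one)$ on $\PX$ together with lower semicontinuity of $\cI$; when $\cI(\rho) = +\infty$ the inequality is trivial by the convention in Definition \ref{def:functional-inequalities}.

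For (2), with $\kappa_Q > 0$, I apply Young's inequality $ab \leq \tfrac{\kappa_Q}{2} a^2 + \tfrac{1}{2\kappa_Q} b^2$ to the right-hand side of \eqref{eq:FWI} with $a = \cW(\rho, \one)$ and $b = \sqrt{\cI(\rho)}$; the two quadratic-in-$\cW(\rho,\one)$ terms cancel and \eqref{eq:EDI} remains. The only non-algebraic step in this plan is the identification of the right derivative of $\cF$ along the geodesic at $t = 0$, which needs the interior regularity from Proposition \ref{prop:geod} to justify differentiating under the continuity equation; this is precisely the reason for proving \eqref{eq:FWI} first on $\PXs$ and only then extending by approximation, and I expect it to be the main technical hurdle.
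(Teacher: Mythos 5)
Your proof is correct and takes essentially the same route as the paper: geodesic $\kappa_Q$-convexity via Proposition \ref{prop:Daneri-Savare} applied along a constant-speed geodesic from $\rho$ to $\one$, the difference quotient at $t=0$ bounded by Cauchy--Schwarz and the identity $\|\nabla f'(\rho)\|_\rho^2=\cI(\rho)$ to give $\sqrt{\cI(\rho)}\,\cW(\rho,\one)$, then Young's inequality for the entropy--dissipation inequality, with the boundary case handled by approximation from $\PXs$ exactly as the paper indicates. The only small slip is the direction of semicontinuity in the approximation step: you need $\limsup_{\eps\to0}\cI(\rho^\eps)\le\cI(\rho)$ (upper, not lower, semicontinuity), which holds because on the finite set $\cX$ the functional $\cI$ is in fact continuous whenever it is finite.
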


\begin{proof}
  The proof follows from similar arguments as the corresponding
  results in \cite{EM11} and uses the heuristics developed in the
  continuous case in \cite{OV00}. However, for the convenience of the
  reader we give a self-contained proof here.

  To prove (1), assume that $\kappa_Q \in \R$. It is sufficient
  to prove $\cF\cW\cI(\kappa_Q)$ for $\rho\in\cP_*(\cX)$. The inequality for
  general $\rho\in\PX$ then follows from an easy approximation
  argument. So fix $\rho\in\cP_*(\cX)$ and let
  $(\sigma_s)_{s\in[0,1]}$ be a constant speed geodesic connecting
  $\rho=\sigma_0$ to $\one=\sigma_1$.
  Using Proposition \ref{prop:Daneri-Savare}
  and rearanging terms in \eqref{eq:K-convex}, we find
  \begin{align}\label{eq:preFWI}
   \cF(\rho) - \cF(\one)\leq - \frac{1}{s}\Big(\cF(\sigma_s)-\cF(\sigma_0)\Big) - \frac{\kappa_Q}{2}(1-s)\cW(\rho,\one)^2\ .
  \end{align}
  Taking into account the smoothness of $(\sigma_t)$ near $t = 0$, we
  obtain the bound
  \begin{align*}
\lim_{s \to 0}\frac{|\cF(\sigma_s)-\cF(\sigma_0)|}{s} \leq \| \grad_\cW\cF(\sigma_0)\|_{\sigma_0}\;
\lim_{s \to 0}\frac{\cW(\sigma_s,\sigma_0)}{s}=\sqrt{\cI(\rho)}\cW(\rho,\one)\ .
  \end{align*}
  Hence, the first assertion of the theorem follows by passing to the
  limit $s\to 0$ in \eqref{eq:preFWI}.

  Let us now prove (2). Assume that $\kappa_Q>0$. From part
  (1) we know that $Q$ satisfies $\cF\cW\cI(\kappa_Q)$. From this we
  derive EDI$(\kappa_Q)$ by an application of Young's inequality:
\begin{align*}
x y \leq c x^2 +\frac{1}{4c}y^2\qquad \forall x,y\in\R\;, \ c>0\ ,
\end{align*}
  in which we set $x=\cW(\rho,\one),\ y=\sqrt{\cI(\rho)}$ and
  $c=\frac{\kappa_Q}{2}$.
\end{proof}

\section{Gromov-Hausdorff convergence}

In this final section we discuss the convergence of discrete
transportation metrics to the Wasserstein metric in a simple setting.

For $N \geq 2$ let $\TN = (\Z / N \Z)^d$ be the discrete torus in
dimension $d \geq 1$, which we regard as an approximation of the
continuous torus $\mathbb{T}^d = [0,1]^d$. We consider the matrix $Q :
\TN \times \TN \to \R$ defined by
\begin{align*}
Q(i,j) =  \left\{ \begin{array}{ll}
N^2,
 & \text{$i = j$},\\
-2 d N^2,
 & \text{$|i - j| =1$},\\
N^2,
 & \text{otherwise}.\end{array} \right.
\end{align*}
Note that the entries are scaled in such a way that the associated
discrete Laplacian $\Delta_N$ defined by \eqref{eq:discrete-Laplace}
approximates the Laplacian $\Delta$ on $\T^d$. Fix $0 < \theta \leq 1$
and let $\cWN$ denote the discrete transportation metric corresponding
to a weight function $\theta_m$.

The following result has already been announced in the introduction.

\begin{theorem}\label{thm:GH}
  Let $d \geq 1$ and $0 < m \leq 2$.
  Then the metric spaces $(\PTN,\cW_N)$ converge to
  $(\cP(\mathbb{T}^d),W_2)$ in the sense of Gromov-Hausdorff as $N 
  \to \infty$.
\end{theorem}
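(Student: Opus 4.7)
The plan is to follow the scheme of \cite{GM12} line by line, the only new input being a verification that the weight function $\theta_m$ for $0 < m \leq 2$ enjoys the structural properties of the logarithmic mean on which that argument rests. I would first introduce the cell-averaging projection $P_N : \cP(\mathbb{T}^d) \to \PTN$ defined by $(P_N \bar\rho)(x) := N^d \int_{Q_x^N} \bar\rho(y)\,dy$, where $Q_x^N$ is the cube of side $1/N$ centred at $x/N$, together with the piecewise-constant lift $\iota_N: \PTN \to \cP(\mathbb{T}^d)$. Since $\bigcup_N \iota_N(\PTN)$ is $W_2$-dense, Gromov-Hausdorff convergence reduces to showing that
\[
\bigl|\cWN(P_N \rho_0, P_N \rho_1) - W_2(\rho_0, \rho_1)\bigr| \to 0
\]
uniformly on a dense set of smooth, strictly positive pairs $(\rho_0, \rho_1)$, which splits into matching asymptotic upper and lower bounds.

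For the upper bound $\limsup_N \cWN(P_N \rho_0, P_N \rho_1) \leq W_2(\rho_0,\rho_1)$, I would take the Benamou-Brenier representation $(\rho_t, v_t)_{t \in [0,1]}$ of the continuous Wasserstein geodesic, set $\rho_{N,t} := P_N \rho_t$, and define a discrete edge-momentum $J_{N,t}(x,y)$ by face-averaging the continuous flux $\rho_t v_t$ across the $(d-1)$-face shared by $Q_x^N$ and $Q_y^N$. A direct computation shows the pair solves the discrete continuity equation. Rewriting the action in momentum form, the denominator $\theta_m(\rho_{N,t}(x), \rho_{N,t}(y))$ converges uniformly to $\rho_t$ at the edge midpoint, by continuity of $\theta_m$ together with the normalisation $\theta_m(r,r) = r$ which follows from the homogeneity in Lemma~\ref{lem:theta-m}(1). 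A standard Riemann-sum argument then yields $\int_0^1 \cA(\rho_{N,t}, \psi_{N,t})\,dt \to W_2(\rho_0,\rho_1)^2$.

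For the lower bound, given a near-minimising discrete curve $(\rho_{N,t}, \psi_{N,t})$ joining $P_N \rho_0$ to $P_N \rho_1$, I would set $\tilde\rho_{N,t} := \iota_N \rho_{N,t}$ and construct a continuous momentum field $\tilde J_{N,t}$ on $\mathbb{T}^d$ by spreading each discrete edge-flux $\hat\rho_{N,t}(x,y)(\psi_{N,t}(y) - \psi_{N,t}(x))$ uniformly over a thin slab perpendicular to the edge $(x,y)$. The pair $(\tilde\rho_{N,t}, \tilde J_{N,t})$ satisfies the continuous continuity equation up to an $o(1)$ error, and the Benamou-Brenier formula gives
\[
W_2(\iota_N \rho_{N,0}, \iota_N \rho_{N,1})^2 \leq \int_0^1 \int_{\mathbb{T}^d} \frac{|\tilde J_{N,t}|^2}{\tilde\rho_{N,t}}\, dx\,dt + o(1).
\]
Comparing the right-hand side with $\int_0^1 \cA(\rho_{N,t}, \psi_{N,t})\,dt$ reduces to the pointwise inequality $\theta_m(r,s) \leq \tfrac12(r+s)$, which holds for every $0 < m \leq 2$ either by the monotonicity in $m$ of Lemma~\ref{lem:theta-m}(2) applied to $\theta_m \leq \theta_2$, or directly from concavity combined with the normalisation $\theta_m(r,r) = r$.

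The main obstacle, and the point where \cite{GM12} does the bulk of the analytic work, is the discrete-to-continuous interpolation of the momentum used in the lower bound: matching the two continuity equations, controlling cross-terms from neighbouring slabs, and ensuring the error is uniform in $N$ and $t$. This step is insensitive to the specific weight function; $\theta_m$ enters only through continuity, the normalisation $\theta_m(r,r) = r$, the homogeneity property, and the arithmetic-mean bound, all of which are supplied by Lemma~\ref{lem:theta-m}. Hence the \cite{GM12} argument carries through essentially unchanged, producing an $\eps_N$-isometry $\iota_N: (\PTN, \cWN) \to (\cP(\mathbb{T}^d), W_2)$ with $\eps_N \to 0$.
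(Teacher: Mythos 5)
Your overall strategy (run the \cite{GM12} scheme and isolate the properties of $\theta_m$ it needs) is the same as the paper's, and your upper bound, which uses only continuity of $\theta_m$ and the normalisation $\theta_m(r,r)=r$, matches the first of the three properties the paper singles out. The gap is in your lower bound, at exactly the weight-function-dependent step. When you spread the edge flux $\hrho(x,y)(\psi(y)-\psi(x))=\theta_m(r,s)(\psi(y)-\psi(x))$ over a slab on which $\tilde\rho=\iota_N\rho$ equals $r$ on one half and $s$ on the other, the Benamou--Brenier integrand contributes, up to the geometric normalisation of the slab,
\[
|\tilde J|^2\cdot\frac12\Big(\frac1r+\frac1s\Big)
=(\psi(y)-\psi(x))^2\,\frac{\theta_m(r,s)^2}{\tilde\theta(r,s)}\,,\qquad
\tilde\theta(r,s)=\frac{2rs}{r+s}\,,
\]
because it is $1/\tilde\rho$, not $\tilde\rho$, that gets averaged over the slab; the harmonic mean appears in the denominator, not the arithmetic mean. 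The ratio of this to the corresponding discrete action term $(\psi(y)-\psi(x))^2\theta_m(r,s)$ is $\theta_m(r,s)/\tilde\theta(r,s)\ge 1$, and it is unbounded when the neighbouring values $r,s$ are very different (e.g.\ $m=2$, $s\to0$). So the comparison you need goes in the \emph{opposite} direction to the inequality $\theta_m(r,s)\le\tfrac12(r+s)$ you invoke, and no pointwise inequality of that type can close the estimate.

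This is precisely why the paper (following \cite{GM12}) compares $\cWN$ with the harmonic-mean metric $\tWN$ and needs two genuinely $\theta$-specific inputs: the quantitative estimate $\frac1{\tilde\theta(a,b)}-\frac1{\theta(a,b)}\le\frac{(b-a)^2}{ab}\frac1{\tilde\theta(a,b)}$ (property (3), which for $\theta_m$ reduces to the case $m=2$ by the monotonicity in $m$ of Lemma \ref{lem:theta-m}), and the concavity of $\theta_m$, which yields $\cWN$-contractivity of the discrete heat semigroup and hence the regularisation step that makes $(b-a)^2/(ab)$ small for neighbouring cells along a (heat-regularised) near-optimal discrete curve. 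Your assertion that the discrete-to-continuous interpolation is ``insensitive to the specific weight function'' is therefore the opposite of the truth: it is the one place where the choice of $\theta$ enters, and your proposal omits both the harmonic-mean comparison and any use of concavity via contraction of the heat flow. To repair the argument, verify properties (1)--(3) of the paper for $\theta_m$, $0<m\le2$, and run the chain $W_2(\iota_N\cdot,\iota_N\cdot)\lesssim\tWN\le(1+o(1))\,\cWN$ on regularised curves, as in \cite{GM12}.
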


Since this result can be proved by following the argument in
\cite{GM12}, we shall not give a complete proof here. Instead, let us
point out the three crucial properties of $\theta = \theta_m$ which allow
us to reapply the argument from \cite{GM12}:
\begin{enumerate}
\item $\theta(t,t) = t$ for all $t \geq 0$.
\item $\theta$ is concave.
\item For all $a, b > 0$ we have \begin{align*}
\frac{1}{\tilde\theta(a,b)} - \frac{1}{\theta(a,b)} 
  \leq \frac{(b-a)^2}{ab}\frac{1}{\tilde\theta(a,b)}\;,
\end{align*}
where $\tilde\theta$ denotes the {harmonic mean} defined by 
$
\tilde\theta(a,b):=\frac{2ab}{a+b}
$.
\end{enumerate}
Assumption $(1)$ is clearly necessary, since it is already
checked at the formal level that the metrics may converge to a
different limit if $(1)$ is violated.  The concavity $(2)$ is used in 
the proof in \cite{GM12} to ensure that the discrete heat semigroup $(P_N(t))_{t \geq
  0}$ contracts the distance:
\begin{align*}
 \cWN\big(P_N(t) \rho_0, P_N(t) \rho_1\big) \leq 
 \cWN(\rho_0, \rho_1)\;.
\end{align*}
This estimate is used in regularisation arguments.  Finally, the
estimate $(3)$ enters the argument, since it turns out to be easier to
compare the Wasserstein metric to the discrete transportation metric
$\tWN$, which is defined using the harmonic mean. The estimate $(3)$
can then be combined with a regularisation argument, which shows that
the difference between $\cWN$ and $\tWN$ becomes negligible as $N$
gets large, provided that the weight function satisfies $(3)$.

Let us note that these properties are satisfied for $\theta_m$ with $0
< m \leq 2$. Indeed, (1) is obvious, and concavity of $\theta$ for $0
< m \leq 2$ has been proved in Lemma \ref{lem:theta-m}. Moreover,
since $\theta_{-1}$ is the harmonic mean, it follows from the
monotonicity of $\theta_m$ in $m$ (proved in Lemma \ref{lem:theta-m})
that it suffices to check (3) for $m =2$. In this case an elementary
computation shows that (3) indeed holds.

\bibliographystyle{plain}
\bibliography{ricci}

 \end{document}